\documentclass[a4paper]{amsart}

\usepackage[T1]{fontenc}
\usepackage{amsmath, amssymb, amsthm, mathrsfs, mathtools, graphicx}
\usepackage{varioref}
\usepackage[backref=page]{hyperref}
\usepackage{enumitem, xspace, ifthen, comment}
\usepackage[all]{xy}
\usepackage[nameinlink, capitalize]{cleveref}
\usepackage[dvipsnames]{xcolor}
\usepackage{tikz-cd}


\setcounter{tocdepth}{1}

\hyphenation{mani-fold}

\setlist[enumerate]{
  label=(\thethm.\arabic*),
  before={\setcounter{enumi}{\value{equation}}},
  after={\setcounter{equation}{\value{enumi}}},
  itemsep=1ex
}

\setlist[itemize]{
  leftmargin=*,
  topsep=1ex,
  itemsep=1ex,
  label=$\circ$
}



\newtheorem*{thm-plain}{Theorem}
\newtheorem{thm}{Theorem}[section]
\newtheorem{lem}[thm]{Lemma}
\newtheorem{prp}[thm]{Proposition}

\newtheorem{fct}[thm]{Fact}

\numberwithin{equation}{thm}

\theoremstyle{definition}
\newtheorem{dfn}[thm]{Definition}

\newtheorem*{dfn-plain}{Definition}
\newtheorem{cons}[thm]{Construction}

\theoremstyle{remark}
\newtheorem{clm}[thm]{Claim}

\newtheorem*{ntn-plain}{Notation}
\newtheorem{setup}[thm]{Setup}
\newtheorem{rem}[thm]{Remark}

\newtheorem{exm}[thm]{Example}
\newtheorem*{rem-plain}{Remark}



\newcommand{\inv}{^{-1}}
\newcommand{\from}{\colon}
\newcommand{\gdw}{\Leftrightarrow}
\newcommand{\imp}{\Rightarrow}
\newcommand{\lto}{\longrightarrow}

\newcommand{\x}{\times}

\newcommand{\surj}{\twoheadrightarrow}
\newcommand{\bij}{\xrightarrow{\,\smash{\raisebox{-.5ex}{\ensuremath{\scriptstyle\sim}}}\,}}
\newcommand{\isom}{\cong}
\newcommand{\defn}{\coloneqq}

\newcommand{\tensor}{\otimes}
\newcommand{\id}{\mathrm{id}}

\newcommand{\wt}{\widetilde}

\renewcommand{\d}{\mathrm d}

\newcommand{\dual}{^{\smash{\scalebox{.7}[1.4]{\rotatebox{90}{\textup\guilsinglleft}}}}}

\newcommand{\factor}[2]{\left. \raise 2pt\hbox{$#1$} \right/\hskip -2pt \raise -2pt\hbox{$#2$}}

\newdir{ ir}{{}*!/-5pt/@^{(}} 
\newdir{ il}{{}*!/-5pt/@_{(}} 

\DeclareMathOperator{\img}{im}

\DeclareMathOperator{\rk}{rk}
\DeclareMathOperator{\Sym}{Sym}
\DeclareMathOperator{\Aut}{Aut}
\DeclareMathOperator{\Out}{Out}
\DeclareMathOperator{\Inn}{Inn}

\newcommand{\lref}{\labelcref}


\newcommand{\set}[1]{\left\{ #1 \right\}}

\def\rd#1.{\lfloor{#1}\rfloor}
\def\rp#1.{\lceil{#1}\rceil}
\def\tw#1.{\langle{#1}\rangle}


\renewcommand{\O}[1]{\mathscr{O}_{#1}}
\newcommand{\Omegap}[2]{\Omega_{#1}^{#2}}

\newcommand{\T}[1]{\mathcal{T}_{#1}}

\newcommand{\reg}[1]{{#1}_{\mathrm{reg}}}

\def\Hnought#1.#2.{\mathit{\Gamma} \!\left( #1, #2 \right)}
\def\HH#1.#2.#3.{\mathrm{H}^{#1} \!\left( #2, #3 \right)}
\def\hh#1.#2.#3.{h^{#1} \!\left( #2, #3 \right)}
\def\RR#1.#2.#3.{R^{#1} #2_* #3}
\def\HHc#1.#2.#3.{\mathrm{H}_{\mathrm{c}}^{#1} \!\left( #2, #3 \right)}
\def\Hh#1.#2.#3.{\mathrm{H}_{#1} \!\left( #2, #3 \right)}
\def\Hom#1.#2.{\mathrm{Hom} \!\left( #1, #2 \right)}
\def\End#1.{\mathrm{End} \!\left( #1 \right)}
\def\sHom#1.#2.{\mathscr{H}\!om \!\left( #1, #2 \right)}
\def\Ext#1.#2.#3.{\mathrm{Ext}^{#1} \!\left( #2, #3 \right)}
\def\sExt#1.#2.#3.{\mathscr{E}\!xt^{#1} \!\left( #2, #3 \right)}
\def\Link#1.#2.{\mathrm{Link} \!\left( #1, #2 \right)}

\newcommand{\GL}[2]{\mathrm{GL}(#1, #2)}

\newcommand{\SO}[1]{\mathrm{SO}(#1)}
\newcommand{\U}[1]{\mathrm{U}(#1)}
\newcommand{\SU}[1]{\mathrm{SU}(#1)}
\newcommand{\ke}{\mathrm{KE}}

\newcommand{\kahler}{K{\"{a}}hler\xspace}

\newcommand{\qe}{quasi-\'etale\xspace}
\newcommand{\lz}{Lipman--Zariski Conjecture\xspace}

\DeclareMathOperator{\vol}{vol}

\DeclareMathOperator{\tr}{tr}



\renewcommand{\theta}{\vartheta}
\renewcommand{\phi}{\varphi}


\newcommand{\Z}{\ensuremath{\mathbb Z}}

\newcommand{\R}{\ensuremath{\mathbb R}}
\newcommand{\C}{\ensuremath{\mathbb C}}
\newcommand{\bB}{\ensuremath{\mathbb B}}

\newcommand{\bP}{\ensuremath{\mathbb P}}

\newcommand{\bT}{\ensuremath{\mathbb T}}


\newcommand{\frg}{\mathfrak g}  
 \newcommand{\frk}{\mathfrak k}


  \newcommand{\sC}{\mathscr C}
 \newcommand{\sE}{\mathscr E}

\newcommand{\cD}{\mathcal D} \newcommand{\cE}{\mathcal E}

\newcommand{\cS}{\mathcal S}





\definecolor{forrest}{RGB}{81,133,49}
\definecolor{mydarkblue}{RGB}{10,92,153}





\title[Slope zero tensors, uVHS and quotients of tube domains]{Slope zero tensors, uniformizing variations of Hodge structure and quotients of tube domains}

\author{Patrick Graf}
\address{Lehrstuhl f\"ur Mathematik I, Universit\"at Bayreuth, 95440 Bayreuth, Germany}
\email{\href{mailto:patrick.graf@uni-bayreuth.de}{patrick.graf@uni-bayreuth.de}}
\urladdr{\href{https://patrickgraf.gitlab.io/en/}{https://patrickgraf.gitlab.io/en/}}

\author{Aryaman Patel}
\address{AG Lazi\'c, Universit\"at des Saarlandes, 66123 Saarbr\"ucken, Germany}
\email{\href{mailto:aryaman.patel@math.uni-sb.de}{aryaman.patel@math.uni-sb.de}}
\urladdr{\href{https://sites.google.com/view/aryamanpatel/home}{https://sites.google.com/view/aryamanpatel/}}

\date{October 16, 2025}
\keywords{Varieties with ample canonical divisor, uniformization, klt singularities, K\"ahler--Einstein metrics, holonomy groups, tube domains}
\subjclass[2020]{14E20, 14E30, 32M15, 32Q20, 32Q30}

\hypersetup{
  pdfauthor={Patrick Graf and Aryaman Patel},
  pdftitle={Slope zero tensors, uniformizing variations of Hodge structure and quotients of tube domains},
  pdfkeywords={Varieties with ample canonical divisor, uniformization, klt singularities, Kähler--Einstein metrics, holonomy groups, tube domains},
  pdfstartview={Fit},
  pdfpagelayout={OneColumn},
  pdfpagemode={UseNone},
  linktoc=all,
  breaklinks,
  linkcolor=[RGB]{0 0 96},
  citecolor=[RGB]{96 0 0},
  urlcolor=[RGB]{0 96 0},
  colorlinks}

\begin{document}

\begin{abstract}
We prove an equivalence between two approaches to characterizing complex-projective varieties $X$ with klt singularities and ample canonical divisor that are uniformized by bounded symmetric domains.
In order to do so, we show how to construct a uniformizing variation of Hodge structure from a slope zero tensor and vice versa.
As a consequence, we generalize various uniformization results of Catanese and Di Scala to the singular setting.
For example, we prove that $X$ is a quotient of a bounded symmetric domain of tube type by a group acting properly discontinuously and freely in codimension one if and only if $X$ admits a slope zero tensor.
As a key step in the proof, we establish the compactness of the holonomy group of the singular K\"{a}hler--Einstein metric on $X_{\mathrm{reg}}$.
\end{abstract}

\maketitle

\tableofcontents

\section{Introduction}

The problem of characterizing smooth projective varieties $X$ uniformized by a bounded symmetric domain $\cD$ is classical and has been thoroughly studied by Yau, Simpson and Catanese--Di Scala, among others~\cite{Yau78, Simpson88, CataneseDiScala13, CataneseDiScala14}.
It has also been solved more generally for projective varieties with klt singularities and for orbifolds~\cite{Patel23, GrafPatel24, GrafPatel25, Catanese25}.

There are two different approaches to the problem:
\begin{itemize}
\item Simpson's approach is of a Hodge-theoretic nature and provides a criterion for uniformization by $\cD = \factor{G_0}{K_0}$ in terms of the existence of a \emph{uniformizing variation of Hodge structure (uVHS)} for $G_0$ on $X$.
The existence of such a uVHS is in turn equivalent to a reduction of structure group of the tangent bundle $\T X$ to $K = (K_0)_\C$ together with a certain Chern class equality depending on $\cD$.
\item The approach of Catanese--Di Scala is more differential-geometric and provides a uniformization criterion in terms of the existence of a holomorphic tensor $\sigma$ on~$X$ having certain properties.
This yields several uniformization results, the most general of which (in terms of $\cD$) is as follows: $X$ is uniformized by a bounded symmetric domain $\cD$ without higher-dimensional ball factors if and only if there is a tensor $\sigma$ with the three properties listed in \cref{main4}.
\end{itemize}

Now fix a bounded symmetric domain $\cD = \factor{G_0}{K_0}$ without higher-dimensional unit balls as factors.
It is then clear that on $X$, the existence of a uVHS for $G_0$ is equivalent to the existence of a tensor $\sigma$ as above (since both are equivalent to the universal cover of $X$ being isomorphic to $\cD$).
However, to the best of our knowledge there is yet no way to see this equivalence directly, i.e.~without passing to the universal cover.
One of the main goals of this article is to build a bridge between these two very different objects, in the more general setting of projective varieties with klt singularities.
More precisely, we construct a uVHS given a tensor (in the sense of Catanese--Di Scala) and vice versa, without passing to the universal cover.
As a consequence, we can generalize the uniformization results of Catanese--Di Scala to the singular setting.

\begin{thm}[cf.~\protect{\cite[Thm.~1.2]{CataneseDiScala13}}] \label{main2}
Let $X$ be an $n$-dimensional normal projective variety with klt singularities and such that $K_X$ is ample.
The following are equivalent:
\begin{enumerate}
\item\label{main2.1} We have $X \isom \factor \cD \Gamma$, where $\cD = \cD_1 \x \cdots \x \cD_m$ is a product of bounded symmetric domains of tube type such that $\rk \cD_i$ divides $\dim \cD_i$ for each $i = 1, \dots, m$, and $\Gamma \subset \Aut(\cD)$ is a discrete cocompact subgroup whose action is free in codimension one.
\item\label{main2.3} There is a semispecial tensor $\psi$ on $X$ (cf.~\cref{def semispecial}).
\end{enumerate}
\end{thm}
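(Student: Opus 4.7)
The plan is to leverage the bridge between slope zero tensors and uniformizing variations of Hodge structure developed earlier in the paper, combined with a singular Simpson-type uniformization criterion; this reduces the theorem to a comparison at the level of invariants of the holonomy representation.

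For the forward direction \textup{(\ref{main2.1}) $\Rightarrow$ (\ref{main2.3})} I would proceed by explicit construction. On each bounded symmetric domain $\cD_i$ of tube type, the generic norm is an $\Aut(\cD_i)$-invariant holomorphic polynomial of degree $\rk \cD_i$ on the tangent space at the base point, and the divisibility $\rk \cD_i \mid \dim \cD_i$ ensures that an appropriate power combines across the factors to produce an $\Aut(\cD)$-invariant section of the tensor bundle underlying semispeciality on~$\cD$. Since $\Gamma$ acts freely in codimension one, the image of its ramification locus in $X$ has codimension at least two; the tensor therefore descends to the complement of a codimension-two subset inside $\reg X$, and reflexivity together with the standard Hartogs-type extension across $\sing X$ yields a semispecial tensor on $X$.

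For the reverse direction \textup{(\ref{main2.3}) $\Rightarrow$ (\ref{main2.1})}, which is the substantive one, my plan proceeds in four steps. First, verify that a semispecial tensor $\psi$ on $X$ is in particular a slope zero tensor with the structural properties required by the bridge theorem; the ampleness of $K_X$ and the klt hypothesis supply the necessary slope and intersection inequalities. Second, apply the bridge to produce from $\psi$ a uVHS on $X$ for a group $G_0$ whose associated bounded symmetric domain is some~$\cD$. Third, apply the Simpson-type uniformization result for klt varieties available from the authors' earlier work \cite{Patel23, GrafPatel24, GrafPatel25} to conclude that $X \isom \cD / \Gamma$ for a discrete cocompact $\Gamma \subset \Aut(\cD)$ acting freely in codimension one. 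Fourth, identify the precise structure of $\cD$: the tensorial shape of $\psi$, together with the compactness of the holonomy group of the singular K\"{a}hler--Einstein metric on $\reg X$ established elsewhere in the paper, should force each factor of $\cD$ to be of tube type and to satisfy $\rk \cD_i \mid \dim \cD_i$.

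The main obstacle will be the fourth step. Semispeciality is a rigid condition encoded by the existence of a very specific invariant in the holonomy representation, and one has to argue that among irreducible bounded symmetric domains precisely the tube type ones with rank dividing dimension admit such an invariant. Granted compactness of the holonomy group, this reduces to a representation-theoretic classification via generic norms on Hermitian Jordan triple systems, paralleling the analysis of Catanese--Di Scala in the smooth case; the delicate point is to perform this matching \emph{without} passing to the universal cover, so that the argument remains valid in the singular klt setting.
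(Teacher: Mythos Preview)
Your forward direction has a subtle gap: the tensor you build on $\cD$ is not $\Aut(\cD)$-invariant. The generic norm on an irreducible tube domain $\cD_i$ (equivalently, the associated special tensor $\Psi_i$) is only \emph{semi-invariant} under $\Aut(\cD_i)$, i.e.\ invariant up to a character $\chi_i \colon \Aut(\cD_i) \to \{\pm 1\}$, and raising to the power $\dim \cD_i / \rk \cD_i$ need not kill this sign. This is precisely why a semispecial tensor is allowed a twist by a $2$-torsion reflexive sheaf $\eta$: descent produces $\psi$ with $\eta$ determined by the restriction of $\prod_i \chi_i$ to $\Gamma$. The paper handles this exactly as Catanese--Di Scala do in the smooth case.

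In the reverse direction, your ordering cannot work as written. To construct a uVHS in step~2 you must already know the Hodge group $G_0$, hence the domain $\cD$; there is no black-box ``bridge'' in the paper that accepts a slope zero or semispecial tensor and outputs a uVHS with $G_0$ to be identified afterwards (the actual bridge theorem concerns the quite different endomorphism tensor $\sigma \in \HH0.X.\sE nd(\T X [\tensor] \Omegap X1).$ with Mok cone conditions). The paper's proof runs in the order: (i)~use the Bochner principle to make $\psi$ parallel and analyze the restricted holonomy, concluding via the Catanese--Di Scala invariant-theoretic argument that each irreducible factor $H_i^\circ$ is the holonomy of a tube domain with rank dividing dimension --- this already determines $\cD$; (ii)~deduce that $(\reg X, \omega_\ke)$ and $(\cD, \omega_{\mathrm{Berg}})$ are locally isometric; (iii)~pass to the holonomy cover to make $H$ connected and construct the uVHS for $G_0 = \Aut(\cD)$; (iv)~apply the singular Simpson criterion. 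So your step~4 is not post-processing but the first and enabling step. Your proposed method for it (representation-theoretic classification of holonomy-invariant tensors) is essentially what the paper does; you just need to move it to the front of the argument.
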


\begin{thm}[cf.~\protect{\cite[Thm.~1.3]{CataneseDiScala13}}] \label{main3}
Let $X$ be an $n$-dimensional normal projective variety with klt singularities and such that $K_X$ is ample.
The following are equivalent:
\begin{enumerate}
\item\label{main3.1} We have $X \isom \factor \cD \Gamma$, where $\cD$ is a bounded symmetric domain of tube type and $\Gamma \subset \Aut(\cD)$ is a discrete cocompact subgroup whose action is free in codimension one.
\item\label{main3.3} There is a slope zero tensor $\psi$ on $X$ (cf.~\cref{def slope zero}).
\end{enumerate}
\end{thm}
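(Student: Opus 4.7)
My plan is to deduce \cref{main3} from this paper's general bridge between slope zero tensors and uniformizing variations of Hodge structure (uVHS), combined with the singular Simpson-type uniformization criterion for klt projective varieties with ample canonical divisor \cite{GrafPatel24, GrafPatel25}. Both directions pass through a uVHS for the group $G_0$ corresponding to a tube-type bounded symmetric domain, so the content really sits in the bridge itself.

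For \lref{main3.1} $\Rightarrow$ \lref{main3.3}: writing $\cD = \factor{G_0}{K_0}$, I would exhibit a canonical $G_0$-invariant holomorphic tensor on $\cD$ whose Chern-class slope vanishes. Tube-type domains carry such a tensor, arising from the generic norm of the underlying Jordan triple system; matching bundle data against \cref{def slope zero} should be a direct check. Since $\Gamma \subset G_0$, the tensor descends to the locus in $\factor{\cD}{\Gamma}$ where $\Gamma$ acts freely, whose complement has codimension $\geq 2$ by hypothesis. Reflexivity of the relevant reflexive tensor powers of $\Omegar{X}{1}$ and $\can{X}$, available since $X$ is normal klt, then extends the tensor across the missing codimension-two locus to yield a slope zero tensor on $X$.

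For \lref{main3.3} $\Rightarrow$ \lref{main3.1}: given a slope zero tensor $\psi$, I would apply the paper's bridge to construct a uVHS on $X$ for a group $G_0$ of a bounded symmetric domain. The key intermediate step is to show that $\psi$ is parallel for the singular K\"ahler--Einstein metric on $\reg{X}$, which exists by the klt hypothesis and ampleness of $K_X$. Parallelism should follow from a Bochner-type identity combined with the compactness of $X$: the pointwise norm of $\psi$ is plurisubharmonic, hence constant, forcing $\nabla \psi = 0$. Then $\psi$ being parallel restricts the holonomy group; combined with the compactness of the holonomy established in this paper (as advertised in the abstract), Berger--de Rham type arguments identify the universal cover of $\reg{X}$ with a bounded symmetric domain, and the slope zero hypothesis (strictly stronger than the semispecial condition of \cref{main2}) should pin down that it is irreducible and of tube type. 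Finally, the singular Simpson criterion produces $X \isom \factor{\cD}{\Gamma}$ with $\Gamma$ discrete, cocompact, and acting freely in codimension one (the last point following from klt together with reflexivity of $\can{X}$).

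The main obstacle is the geometric identification in the last paragraph: ruling out non-tube irreducible factors and any splitting of $\cD$ purely from the slope zero data of $\psi$. This requires matching the precise algebraic content of $\psi$ with the classification of simple Jordan algebras over $\C$ governing irreducible tube domains, and is presumably where the difference between slope zero and semispecial enters in an essential way. Once this identification is in place, both implications reduce to invoking the paper's general bridge and the singular Simpson uniformization, so I expect the present theorem to fall out of the bridge construction with only a modest amount of additional representation-theoretic bookkeeping.
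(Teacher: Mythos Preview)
Your overall architecture is right and matches the paper: Bochner principle makes $\psi$ parallel, the holonomy analysis (using \cref{hol cover intro}) forces each irreducible holonomy factor to be that of a tube domain (\cref{main3 Hol0}), local isometry with $\cD$ follows (\cref{local iso}), and then one builds a uVHS and invokes \cref{prp uVHS}. The forward direction via an $\Aut(\cD)$-semi-invariant tensor on $\cD$ descending by reflexivity is also what the paper does.

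However, two concrete points in your sketch are wrong and would derail the argument if pursued literally. First, you have the comparison backwards: slope zero is \emph{weaker} than semispecial, not stronger. A semispecial tensor squared gives a slope zero tensor with $m=2$, and correspondingly the conclusion of \cref{main3} is weaker than that of \cref{main2} (no divisibility constraint $\rk\cD_i \mid \dim\cD_i$). Second, and relatedly, you should \emph{not} try to prove that $\cD$ is irreducible: the statement only asserts that $\cD$ is of tube type, which for a bounded symmetric domain just means every irreducible factor is of tube type. The slope zero condition does not preclude splittings, so your ``main obstacle'' of ruling out any splitting of $\cD$ is a non-goal; the actual obstacle, handled by \cref{main3 Hol0}, is only to rule out non-tube irreducible factors, and this goes through the $H^\circ$-invariance of the zero locus of $\psi_x$ as in~\cite[proof of Thm.~1.3]{CataneseDiScala13}, not through Jordan algebra classification.

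One further caution: phrasing the middle step as ``identify the universal cover of $\reg X$'' is exactly what the paper avoids. Since $(\reg X,\omega_\ke)$ is incomplete, de~Rham decomposition is unavailable; the paper instead passes through local isometry (\cref{local iso}) and the explicit uVHS construction of \cref{tensor to uvhs}, after which \cref{prp uVHS} supplies the global uniformization (including the passage to a smooth finite cover via~\cite{GKP16} and the Lipman--Zariski argument). If you rewrite your sketch to reflect these three points, it coincides with the paper's proof.
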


\begin{thm}[cf.~\protect{\cite[Thm.~1.2]{CataneseDiScala14}}] \label{main4}
Let $X$ be an $n$-dimensional normal projective variety with klt singularities and such that $K_X$ is ample.
The following are equivalent:
\begin{enumerate}
\item\label{main4.1} We have $X \isom \factor \cD \Gamma$, where $\cD$ is a bounded symmetric domain none of whose irreducible factors is isomorphic to a unit ball $\bB^m$ of dimension $m \ge 2$ and $\Gamma \subset \Aut(\cD)$ is a discrete cocompact subgroup whose action is free in codimension one.
\item\label{main4.3} There is a tensor
\[ \sigma \in \HH0.X.\sE nd \big( \T X [\tensor] \Omegap X1 \big). = \HH0.\reg X.\sE nd \big( \T{\reg X} \tensor \Omegap{\reg X}1 \big). \]
such that:
\begin{enumerate}[label=(\thethm.\arabic*),leftmargin=*]
\setcounter{enumii}{2}
\item\label{p1} There is a point $p \in \reg X$ and a splitting of the tangent space $T \defn T_p X$ as
\[ T = T_1' \oplus \cdots \oplus T_m' \]
such that the \emph{first Mok characteristic cone} $\mathcal{CS}$ of $\sigma_p$ is not all of $T$ and $\mathcal{CS}$ splits into $m$ irreducible components $\mathcal{CS}'(j)$ with
\item\label{p2} $\mathcal{CS}'(j) = T_1' \x \cdots \x \mathcal{CS}_j' \x \cdots \times T_m'$, where
\item\label{p3} $\mathcal{CS}_j' \subset T_j'$ is the cone over a non-degenerate (i.e.~$\mathcal{CS}_j'$ spans $T_j'$) smooth projective variety $\cS_j'$, unless $\mathcal{CS}_j' = \set{ 0 }$ and $\dim T_j' = 1$.
\end{enumerate}
\end{enumerate}
\end{thm}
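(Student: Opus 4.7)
The plan is to prove the two implications separately, using the bridge between tensors and uniformizing variations of Hodge structure (uVHS) that the paper develops, in the same spirit as the proofs of \cref{main2} and \cref{main3}.

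For \lref{main4.1} $\Rightarrow$ \lref{main4.3}, I would argue on the universal cover. On each irreducible factor $\cD_j$ of $\cD$ there is a canonical $\Aut(\cD_j)$-invariant holomorphic section of $\sE nd(\T{\cD_j} \tensor \Omegap{\cD_j}{1})$ whose first Mok characteristic cone at a base point is (up to the cone point) the variety of minimal rational tangents. The hypothesis that $\cD$ has no factor isomorphic to a higher-dimensional ball guarantees that each such variety is either a single point (if $\cD_j$ is a disc) or a smooth non-degenerate projective variety, so conditions \lref{p2} and \lref{p3} are automatic. Taking the sum of these factorwise tensors gives an $\Aut(\cD)$-invariant tensor on $\cD$, which descends under $\Gamma$ to a reflexive tensor on $\reg X$ and extends uniquely to $X$ by normality, yielding \lref{main4.3}.

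For \lref{main4.3} $\Rightarrow$ \lref{main4.1}, the strategy is to construct from $\sigma$ a uVHS on $X$ for an appropriate real semisimple group $G_0$, and then invoke the singular Simpson-type uniformization theorem to obtain $X \isom \cD/\Gamma$ with $\cD = G_0/K_0$. The crucial input is the compactness of the holonomy group of the singular K\"ahler--Einstein metric on $\reg X$, highlighted in the abstract. Concretely, I would first show that $\sigma$ is parallel with respect to the KE connection on $\reg X$: here one uses that $\sigma$ is a global holomorphic tensor on a KE variety and that a Bochner/$L^2$-style vanishing forces $\nabla \sigma = 0$. Compactness of the holonomy group then produces a holonomy reduction of $\T{\reg X}$ that preserves $\sigma$ simultaneously, and a de Rham-type splitting of the KE metric on $\reg X$ corresponds exactly to the factor decomposition in \lref{p1}. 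On each factor, the fact that $\mathcal{CS}_j'$ is either $\set{0}$ in a one-dimensional space or the non-degenerate smooth cone appearing in \lref{p3} is precisely Mok's characterization ruling out higher-dimensional ball factors, so the holonomy-reduced tangent bundle corresponds to a bounded symmetric domain of the required type. Finally, this reduction is repackaged as a uVHS for $G_0$ via the paper's general tensor-to-uVHS construction already used in \cref{main2} and \cref{main3}.

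The main obstacle I expect is the passage from the algebraic tensor $\sigma$ on $X$ to the analytic statements about the singular KE metric on $\reg X$: specifically, the parallelism of $\sigma$ and the compactness of the holonomy. Once compact holonomy is available, the analogy with the smooth case of Catanese--Di Scala is close, but the extension of parallelism and holonomy data across $\sing X$---and the verification that the resulting local system underlies a polarized VHS on $X$---is where the singular-analytic subtleties accumulate. Everything else should be mostly formal once these foundations are in place, as in the proofs of \cref{main2} and \cref{main3}.
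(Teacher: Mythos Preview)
Your proposal is correct and follows essentially the same route as the paper: \cref{main4} is deduced by combining \cref{main5} (tensor $\Leftrightarrow$ uVHS on $\reg X$) with \cref{prp uVHS} (uVHS on $\reg X$ $\Leftrightarrow$ uniformization), and your outline of the tensor-to-uVHS direction---Bochner parallelism of $\sigma$, compactness of holonomy via \cref{hol cover intro}, holonomy reduction, de Rham splitting, and Mok's characterization to exclude higher ball factors---matches the paper's \cref{tensor to uvhs} and \cref{main4 Hol0}.

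Two minor remarks. First, for \lref{main4.1} $\Rightarrow$ \lref{main4.3} the paper goes through the uVHS bridge (\cref{prp uVHS} then \cref{uvhs to tensor}), but your direct descent of an $\Aut(\cD)$-invariant tensor from $\cD$ is equivalent and arguably more transparent; the tensor the paper builds in \cref{uvhs to tensor} is precisely the algebraic curvature tensor of the factors, which is the same object you describe. Second, your worry about ``extension of parallelism and holonomy data across $\sing X$'' and verifying a polarized VHS ``on $X$'' is slightly misplaced: the uVHS only needs to live on $\reg X$, and the passage to a global uniformization statement is handled separately in \cref{prp uVHS} via a \gkp cover and the \lz, not by extending the VHS itself. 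The genuinely delicate analytic step you should flag instead is the verification that the metric on the Hodge bundle system is flat, which the paper handles by the local isometry with $(\cD,\omega_{\mathrm{Berg}})$ in \cref{local iso}.
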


\noindent
See \cref{def mok} for the definition of the first Mok characteristic cone.

We remark that although these results are formulated in terms of uniformization (i.e.~without reference to the existence of a uVHS), the proofs are actually in the form promised above.
The uniformization statements are then deduced by using \cref{prp uVHS} below.
For example, instead of \cref{main4} we actually prove the following.

\begin{thm} \label{main5}
Let $X$ be an $n$-dimensional normal projective variety with klt singularities and such that $K_X$ is ample.
The following are equivalent:
\begin{enumerate}
\item\label{main5.1} The smooth locus $\reg X$ admits a uniformizing VHS for a Hodge group $G_0$ of Hermitian type such that $\frg_0$ has no factors isomorphic to $\mathfrak{su}(p, 1)$ with $p \ge 2$.
\item\label{main5.2} There is a holomorphic tensor $\sigma \in \HH0.X.\sE nd \big( \T X [\tensor] \Omegap X1 \big).$ satisfying the three conditions~\lref{p1}--\lref{p3}.
\end{enumerate}
\end{thm}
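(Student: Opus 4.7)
The plan is to establish both implications by a direct translation, on $\reg X$, between uVHS data for a Hodge group $G_0$ and the tensor $\sigma$, and then to transfer the correspondence between $\reg X$ and $X$ using reflexivity of $\sE nd(\T X \tensor \Omegap X1)$. The analytic backbone is the singular \kahler--Einstein metric on $\reg X$ together with the compactness of its holonomy group, which is established earlier in the paper.

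For \lref{main5.2} $\Rightarrow$ \lref{main5.1}, I would first restrict $\sigma$ to $\reg X$ and argue, via a Bochner-type identity for holomorphic tensors on the canonically polarized klt variety $X$, that $\sigma$ is parallel with respect to the \kahler--Einstein metric. Compactness of the holonomy group then propagates the conditions \lref{p1}--\lref{p3} from the single point $p$ to every point of $\reg X$ in a way that is equivariant under parallel transport. At each such point the conditions describe the Mok characteristic variety as a product of non-degenerate cones $\mathcal{CS}_j'$, none of which fills the entire factor $T_j'$. Invoking Mok's classification of characteristic varieties of irreducible Hermitian symmetric spaces of compact type, this data determines, compatibly with the holonomy, a reduction of the frame bundle of $\T{\reg X}$ to $K = (K_0)_\C$ for the isotropy group $K_0$ of a bounded symmetric domain $\cD = \cD_1 \x \cdots \x \cD_m$ no factor of which is a ball of dimension $\ge 2$. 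Packaging this reduction in Hodge-theoretic form and applying \cref{prp uVHS} to verify that it is uniformizing yields a uVHS for a Hodge group $G_0$ of Hermitian type with $\frg_0$ having no $\mathfrak{su}(p,1)$ factors for $p \ge 2$.

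For the converse \lref{main5.1} $\Rightarrow$ \lref{main5.2}, the uVHS provides a reduction of the frame bundle of $\T{\reg X}$ to $K = (K_0)_\C$ inside the complexification of $G_0$. On the compact dual $\check\cD$ of $\cD = G_0/K_0$, the Mok characteristic variety is invariant under the complexified group action, hence is cut out by a canonical equivariant tensor $\check\sigma$ in $\sE nd(\T{\check\cD} \tensor \Omegap{\check\cD}1)$. Transporting $\check\sigma$ via the $K$-reduction yields a section $\sigma \in \HH0.\reg X.\sE nd(\T{\reg X} \tensor \Omegap{\reg X}1).$, which extends uniquely to $X$ because $\sE nd(\T X \tensor \Omegap X1)$ is reflexive. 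The conditions \lref{p1}--\lref{p3} at any point $p \in \reg X$ are then immediate translations of the known decomposition and non-degeneracy of the Mok characteristic variety of the product $\cD$; the hypothesis that no irreducible factor is a ball of dimension $\ge 2$ is precisely what prevents $\mathcal{CS}$ from filling all of $T$.

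The main obstacle lies in the direction \lref{main5.2} $\Rightarrow$ \lref{main5.1}, in passing from the pointwise data \lref{p1}--\lref{p3} at a single $p$ to a global holonomy reduction. This step requires three ingredients to fit together cleanly: parallelism of $\sigma$ with respect to the \kahler--Einstein metric (a Bochner-type consequence of $\sigma$ being a holomorphic \emph{endomorphism} tensor in slope zero), compactness of the holonomy group of that metric, and Mok's classification of characteristic varieties. Once the holonomy representation is identified with the isotropy representation of a Hermitian symmetric space without higher-dimensional ball factors, \cref{prp uVHS} packages the resulting $K$-reduction into an honest uniformizing VHS.
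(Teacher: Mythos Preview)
Your sketch of \lref{main5.1} $\Rightarrow$ \lref{main5.2} is essentially the paper's: the paper builds $\sigma$ explicitly from the Lie bracket maps $\frg_i^{-1,1} \tensor \frg_i^{1,-1} \to \frg_i^{0,0} \hookrightarrow \End{\frg_i^{-1,1}}.$, which is the algebraic curvature tensor you are alluding to via the compact dual, and then extends by reflexivity.

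There is, however, a genuine gap in \lref{main5.2} $\Rightarrow$ \lref{main5.1}. You correctly make $\sigma$ parallel via the Bochner principle and identify the restricted holonomy with the isotropy representation of a bounded symmetric domain $\cD$ without higher ball factors. This yields a reduction of the frame bundle to $K_0$, hence a uniformizing \emph{system of Hodge bundles} $(P,\theta)$ together with a metric $P_H$. But this is not yet a uVHS: one must still prove that the connection $D_h = d_h + \theta + \bar\theta$ on $P_H \times_{K_0} G_0$ is \emph{flat}. Your invocation of \cref{prp uVHS} at this point is circular---that proposition either consumes a uVHS to produce uniformization, or produces a uVHS from an already known isomorphism $X \cong \cD/\Gamma$; it cannot certify flatness of a candidate connection.

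The paper closes this gap with a separate differential-geometric step (\cref{local iso}): once each $H_i^\circ$ is identified with the holonomy of an irreducible $\cD_i$ of rank $\ge 2$ (or a disc), Berger's theorem forces $(\reg X, \omega_\ke)$ to be locally symmetric, and \cref{dim and K_0 uniqueness} pins down the local model as $(\cD, \omega_{\mathrm{Berg}})$. Flatness of $D_h$ is then checked locally by pulling back the known flat connection on $\cD$ through these local isometries. Note also that the reduction of structure group is to $H$, not $H^\circ$; the paper handles this by first passing to the holonomy cover (\cref{hol cover}) so that $H = H^\circ$, building the uVHS there, and descending via \cref{uvhs cover}. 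Your phrase ``compactness propagates the conditions'' glosses over this reduction.
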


\subsection*{Compactness of the holonomy group}

As a key technical step in the proof, we establish the following result, which might be of independent interest.
Recall that for a projective variety $X$ with klt singularities and $K_X$ ample, the smooth locus $\reg X$ carries a \kahler--Einstein metric $\omega_\ke$~\cite{EGZ09, BermanGuenancia14}.
It is relatively easy to deduce information about the restricted holonomy group $H^\circ$ of this metric from the existence of a tensor as in the above theorems, using the Bochner principle.
Computing the full holonomy group $H$, however, is difficult, as it might even have infinitely many components.
\cref{hol cover intro} overcomes this problem by showing that $H$ is in fact compact.
Since the tangent bundle $\T{\reg X}$ only admits a reduction of structure group to $H$, but not to $H^\circ$, this statement is crucial in order to pass from the given $X$ to the appropriate bounded symmetric domain $\cD$ (roughly speaking, by considering $\cD = \factor{G_0}{K_0}$ with $K_0 = H$).

\begin{thm}[Holonomy cover] \label{hol cover intro}
Let $X$ be an $n$-dimensional normal projective variety with klt singularities and such that $K_X$ is ample.
\begin{enumerate}
\item The holonomy group $H$ of $(\reg X, \omega_\ke)$ is compact.
\item There is a finite \qe Galois cover $\gamma \from Y \to X$ such that the holonomy group of $(\reg Y, \omega_{\ke, Y})$ is connected.
\end{enumerate}
\end{thm}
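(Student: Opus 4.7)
The plan is to establish part~(2) first; part~(1) then follows by a short extension argument. My strategy for part~(2) relies on the algebraic de Rham decomposition for projective klt varieties with ample canonical divisor, developed by Greb--Kebekus--Peternell and refined by Druel and by H\"oring--Peternell. Since $\omega_\ke$ induces a singular Hermite--Einstein structure on $\T X$ (after Eyssidieux--Guedj--Zeriahi and Berman--Guenancia), the tangent sheaf is polystable with respect to $K_X$ and its restricted holonomy $H^\circ$ preserves an isotypic decomposition. These decomposition theorems produce a finite \qe Galois cover $Y \to X$ -- with $Y$ again klt projective and $K_Y$ ample -- on which the decomposition globalises to an algebraic direct sum splitting
\[
\T Y = \bigoplus_i \cF_i
\]
of $\omega_{\ke,Y}$-parallel coherent subsheaves, each fibrewise irreducible under the restricted holonomy.

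To upgrade this to connectedness of the full holonomy $H_Y$, I would argue as follows. The full holonomy $H_Y$ permutes the set $\{\cF_i\}$, since parallel transport preserves parallel subsheaves; after passing to a further finite Galois cover of $X$ we may assume this permutation action is trivial, so that $H_Y$ respects the factor decomposition. On each $\cF_i$, the restricted holonomy $H_{Y,i}^\circ$ acts irreducibly, and Berger's classification in the negatively curved K\"ahler--Einstein setting identifies $H_{Y,i}^\circ$ as either $\U{r_i}$ or the isotropy of an irreducible Hermitian symmetric space of non-compact type -- the Calabi--Yau and hyperk\"ahler holonomies being incompatible with $\Ric<0$. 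In each such case $H_{Y,i}^\circ$ is a maximal connected compact subgroup of $\U{r_i}$ whose normaliser has only finitely many components, so after at most one further finite Galois cover we obtain $H_{Y,i} = H_{Y,i}^\circ$. Since the number of factors is bounded by $\dim X$, a common refinement produces the desired $Y$.

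Part~(1) then follows immediately: lifting loops in $\reg X$ through $\gamma$ yields a short exact sequence
\[
1 \lto H_Y \lto H_X \lto \Gal(Y/X) \lto 1,
\]
whose outer terms are compact ($H_Y$ being a connected subgroup of $\U{n}$, $\Gal(Y/X)$ being finite), so $H_X$ is compact.

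The main obstacle, in my view, is running the finite-cover reduction algebraically at each step: this requires the correspondence between finite-index subgroups of $\pi_1(\reg X)$ and finite \qe Galois covers, the algebraic de Rham decomposition in the singular setting, and termination of the inductive procedure for killing the \emph{defect of connectedness} of each individual factor. The first two inputs are by now standard in the singular MMP literature; the last rests on the finite-rank bound for the parallel decomposition of $\T{\reg X}$.
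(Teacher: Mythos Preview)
Your overall strategy is correct and in fact coincides with the paper's: one reduces to showing that for each irreducible restricted holonomy factor $H_i^\circ \subset \U{V_i}$, the normalizer $N_{\U{V_i}}(H_i^\circ)$ has only finitely many connected components. The gap is that you \emph{assert} this finiteness (``a maximal connected compact subgroup of $\U{r_i}$ whose normaliser has only finitely many components'') without justification, when it is precisely the heart of the theorem. The paper's remark following the statement makes this explicit: the claim appears in~\cite[Prop.~10.114]{Besse87} without proof, and the reference cited there treats only \emph{compact} locally symmetric manifolds, whereas $\reg X$ is neither compact nor complete.

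The paper's argument for this step runs as follows. If $H_i^\circ = \U{V_i}$ there is nothing to show. Otherwise $H_i^\circ$ is the isotropy of an irreducible Hermitian symmetric space, and the decisive input is that its center $Z(H_i^\circ)$ is \emph{exactly one-dimensional} --- a feature specific to the Hermitian case. Schur's lemma then forces both $Z(H_i^\circ)$ and the centralizer $C = C_{\U{V_i}}(H_i^\circ)$ to equal $\U1 \cdot \id_{V_i}$, so $C/Z(H_i^\circ)$ is trivial. A general group-theoretic exact sequence
\[
1 \lto \factor{C}{Z(H_i^\circ)} \lto \factor{N}{H_i^\circ} \lto \Out(H_i^\circ)
\]
then reduces the claim to finiteness of $\Out(H_i^\circ)$, which the paper proves (as a separate proposition) for any compact connected Lie group with center of dimension at most one. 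Your side assertion that $H_i^\circ$ is \emph{maximal} in $\U{r_i}$ is neither clearly true nor, even if true, sufficient to bound $[N : H_i^\circ]$.

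Two smaller points. First, the detour through an ``algebraic de Rham decomposition'' producing a global splitting $\T Y = \bigoplus_i \cF_i$ is both unnecessary and not obviously available: the references you invoke (Druel, H\"oring--Peternell) concern the $K_X \equiv 0$ setting, not $K_X$ ample. The paper avoids this entirely by working with the holonomy representation at a single point $x \in \reg X$; the permutation of the factors $V_i$ by the full holonomy is handled by~\cite[Lemma~5.3]{GrafPatel25}, and once $[H:H^\circ] < \infty$ is known, both compactness and the existence of the cover follow at once. Second, the exact sequence $1 \to H_Y \to H_X \to \Gal(Y/X) \to 1$ is not correct as written (there is no natural surjection $H_X \to \Gal(Y/X)$); what is true, and sufficient, is that $H_Y$ sits inside $H_X$ with index bounded by $|\Gal(Y/X)|$.
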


\noindent
In the Ricci-flat case, analogous statements have been proved in~\cite[Thm.~B]{GGK19} and~\cite[Thm.~C]{BochnerCGGN}.

\begin{rem-plain}
It is claimed in~\cite[Prop.~10.114]{Besse87} that for symmetric spaces, the holonomy representation always has finite index inside its normalizer.
This would easily imply \cref{hol cover intro}, however~\cite{Besse87} contains no proof and the reference given there~\cite[Cor.~7.2]{Wolf62} only proves the weaker statement that the holonomy of a \emph{compact} locally symmetric Riemannian manifold is compact.
Unless $X$ is smooth, $\reg X$ is clearly not compact and not even complete~\cite[Prop.~4.2]{GGK19}.
Therefore we cannot apply~\cite{Wolf62} in our situation.
In fact, since our proof only works for Hermitian symmetric spaces, we do not know whether~\cite[Prop.~10.114]{Besse87} is valid for general Riemannian symmetric spaces (but we do not need this level of generality).
\end{rem-plain}

\subsection*{Sketch of proof}

The proof of \cref{main5} is divided into five steps:
\begin{enumerate}[label=(\theenumi), leftmargin=*]
\setcounter{enumi}{0}
\item We consider the singular \kahler--Einstein metric $\omega_\ke$ on $X$.
By passing to the holonomy cover (\cref{hol cover intro}), we may assume that the holonomy group~$H$ of $(\reg X, \omega_\ke)$ is connected.
\item We use the Bochner principle to show that each irreducible factor $H_i$ of $H$ is the holonomy of a bounded symmetric domain $\cD_i$ not isomorphic to a unit ball $\bB^m$ with $m \ge 2$.
\item The tangent bundle $\T{\reg X}$ admits a reduction of structure group to $H$.
In other words, the frame bundle of $\reg X$ contains a principal $H$-bundle $P_H$.
Letting $H_\C$ denote the complexification of $H$, the principal $H_\C$-bundle $P \defn P_H \x_H H_\C$ provides a uniformizing system of Hodge bundles $(P, \theta)$ on $\reg X$.
\item The holonomy bundle $P_H$ is a metric on this system of Hodge bundles (in the sense of \cref{def metric}) and makes it into a uniformizing VHS.
\item The final step is to prove the converse, i.e.~that we obtain a tensor satisfying \lref{p1}--\lref{p3} starting with a uniformizing VHS.
\end{enumerate}

\subsection*{Acknowledgements}

We would like to thank Henri Guenancia for helpful discussions.
The first author was funded by the Deutsche Forschungsgemeinschaft (DFG, German Research Foundation) -- Projektnummer 521356266.
The second author was supported by the Deutsche Forschungsgemeinschaft (DFG, German Research Foundation) -- Project ID 286237555 (TRR 195) and Project ID 530132094.

\section{Preliminaries}

\subsection{Hodge groups and Hermitian symmetric spaces}

We recall some basic facts about Hermitian symmetric spaces.
This material can be found in~\cite{Simpson88} and~\cite{CMP17}.

\begin{dfn}[Hodge group of Hermitian type]
A \emph{Hodge group of Hermitian type} is a semisimple real algebraic group without compact factors $G_0$ whose complexified Lie algebra $\frg \defn \frg_0 \tensor_\R \C$ carries a Hodge decomposition
\[ \frg = \frg^{-1,1} \oplus \frg^{0,0} \oplus \frg^{1,-1} \]
such that the Lie bracket of $\frg$ is compatible with the Hodge decomposition in the sense that $[ \frg^{p,-p}, \frg^{q,-q} ] \subset \frg^{p+q,-p-q}$ for all $p, q \in \set{ -1, 0, 1 }$.
Furthermore the bilinear form on $\frg$ given by $(-1)^{p + 1} \tr( \operatorname{ad}(U) \circ \operatorname{ad}(\overline V) )$ on $\frg^{p,-p}$ must be positive definite.
\end{dfn}

Let $K_0$ denote the real subgroup of $G_0$ corresponding to the Lie algebra $\frg_0^{0,0} = \frg^{0,0} \cap \frg_0$.
Then $K_0$ is the largest subgroup such that the adjoint action of $K_0$ on $\frg$ preserves the Hodge decomposition of $\frg$.

We note the following classical facts, which will be needed later.

\begin{fct}
Let $G_0$ and $K_0$ be as above.
\begin{enumerate}
\item $K_0$ is a maximal compact subgroup of $G_0$.
\item The quotient $\cD = \factor{G_0}{K_0}$ is a Hermitian symmetric space of non-compact type (= bounded symmetric domain).
Moreover, every bounded symmetric domain can be expressed as such a quotient, by taking $G_0 = \Aut(\cD)$.
\item Given $\cD$, the groups $G_0$ and $K_0$ are uniquely determined up to isogeny.
In particular, the Lie algebra $\frg$ is determined by $\cD$.
\end{enumerate}
\end{fct}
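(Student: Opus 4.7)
The plan is to derive all three assertions from standard Cartan theory of real semisimple Lie groups, using the Hodge-theoretic data built into the hypothesis. First I would extract a Cartan decomposition of $\frg_0$ directly from the Hodge structure. Setting $\frk_0 \defn \frg_0^{0,0}$ and $\frp_0 \defn \frg_0 \cap (\frg^{-1,1} \oplus \frg^{1,-1})$, the bracket condition $[\frg^{p,-p}, \frg^{q,-q}] \subset \frg^{p+q,-p-q}$ (together with the vanishing $\frg^{p,-p} = 0$ for $|p| \ge 2$) yields $[\frk_0, \frk_0] \subset \frk_0$, $[\frk_0, \frp_0] \subset \frp_0$ and $[\frp_0, \frp_0] \subset \frk_0$. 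The positive definiteness of the prescribed trace form, read on each graded piece, translates into the Killing form of $\frg_0$ being negative definite on $\frk_0$ and positive definite on $\frp_0$. Hence the involution $\theta$ equal to $+\id$ on $\frk_0$ and $-\id$ on $\frp_0$ is a Cartan involution, and its fixed subgroup $K_0 = G_0^\theta$ is a maximal compact subgroup of $G_0$, giving the first assertion.

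For the second assertion, the quotient $\cD = G_0/K_0$ then acquires the structure of a simply connected globally symmetric Riemannian space of non-compact type by the standard correspondence between Cartan pairs and symmetric spaces, with metric induced by the positive definite form on $\frp_0 \isom T_{eK_0}\cD$. The Hodge grading on $\frp_0$ furnishes a $K_0$-invariant almost complex structure $J$, defined to be multiplication by $i$ on $\frg^{-1,1}$ and by $-i$ on $\frg^{1,-1}$; translating by $G_0$ gives a global almost complex structure whose integrability follows from the weight-$1$ condition $[\frg^{-1,1}, \frg^{-1,1}] \subset \frg^{-2,2} = 0$, while compatibility with the metric is built in by construction. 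Hence $\cD$ is Hermitian symmetric of non-compact type, i.e.~a bounded symmetric domain. The converse direction is the Harish-Chandra correspondence: for a bounded symmetric domain $\cD$, one takes $G_0 = \Aut(\cD)^\circ$ and lets the Hodge decomposition of $\frg$ be the weight decomposition for the adjoint action of the canonical $\U 1 \subset Z(K_0)$, whose $\operatorname{ad}$-eigenvalues are $\set{0, \pm i}$.

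For the third assertion, the transitive holomorphic action on $\cD$ recovers $G_0$ as $\Aut(\cD)^\circ$ up to a finite central isogeny, and $K_0$ is then pinned down as the isotropy subgroup of any chosen basepoint; hence both, and in particular $\frg$, are determined by $\cD$ up to isogeny. The main obstacle I expect is the integrability step in the second assertion, since this is precisely where the weight-$1$ (rather than higher-weight) nature of the Hodge structure is used in an essential way; everything else is essentially bookkeeping once the Cartan decomposition is in hand.
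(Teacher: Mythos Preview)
The paper does not actually prove this statement: it is labelled a ``Fact'' and introduced with ``We note the following classical facts, which will be needed later'', with the references~\cite{Simpson88} and~\cite{CMP17} given for the surrounding material. So there is no proof in the paper to compare against.

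That said, your proposal is a correct and standard unpacking of these classical facts. The key observation---that the Hodge decomposition of weight zero with pieces only in degrees $-1, 0, 1$, together with the sign-twisted polarization, is precisely the data of a Cartan decomposition $\frg_0 = \frk_0 \oplus \frp_0$ equipped with a $K_0$-invariant complex structure on $\frp_0$---is exactly right, and the integrability argument via $[\frg^{-1,1}, \frg^{-1,1}] \subset \frg^{-2,2} = 0$ is the correct use of the weight-one hypothesis. One minor point: in the converse direction of the second assertion you write $G_0 = \Aut(\cD)^\circ$, but the paper's statement says $G_0 = \Aut(\cD)$; since $\Aut(\cD)$ may have finitely many components this is a slight discrepancy, though harmless up to the isogeny ambiguity already acknowledged in the third assertion.
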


Let $X$ be a smooth quasi-projective variety or a complex manifold, and let $G_0, K_0$ be as above.
We denote by $G$ and $K$ the complexifications of the groups $G_0$ and $K_0$, respectively.

\begin{dfn}[Uniformizing system of Hodge bundles]
A \emph{uniformizing system of Hodge bundles for $G_0$} on $X$ is a pair $(P, \theta)$, where $P$ is a holomorphic principal $K$-bundle on $X$ and $\theta$ is an isomorphism of vector bundles
\begin{align} \label{ushb}
\theta \from \T X \bij P \x_K \frg^{-1,1}
\end{align}
such that $[\theta(u), \theta(v)] = 0$ for all local sections $u, v$ of $\T X$.
\end{dfn}

If $(P, \theta)$ is a uniformizing system of Hodge bundles on $X$, then we also have an isomorphism $\Omegap X1 \isom P \x_K \frg^{1,-1}$.
Setting $\cE^{i,-i} \defn P \x_K \frg^{i,-i}$, we can form the direct sum
\[ \cE \defn P \x_K \frg = \cE^{-1,1} \oplus \cE^{0,0} \oplus \cE^{1,-1}. \]
There is a natural Higgs field $\cE \to \cE \tensor \Omegap X1$, which we also denote by $\theta$, given by $u \mapsto ( v \mapsto [ \theta(v), u ] )$.
It is easily checked that $\theta(\cE^{i,-i}) \subset \cE^{i-1,-i+1} \tensor \Omegap X1$ and that $\theta \wedge \theta = 0$ (due to the Jacobi identity in $\frg$ and the fact that $[\theta(u), \theta(v)] = 0$ for all local vector fields $u, v$).
So $\cE$ is an honest system of Hodge bundles, which justifies the name.

\begin{dfn}[Metric] \label{def metric}
Let $(P, \theta)$ be as above.
A \emph{metric} $h$ on $P$ is a $\sC^\infty$-reduction in structure group of $P$ from $K$ to $K_0$, i.e.~a principal $K_0$-subbundle $P_H \subset P$.
We then automatically have $P \isom P_H \x_{K_0} K$.
\end{dfn}

Recall the following well-known facts from the theory of principal bundles.

\begin{prp} \label{conn principal bundle}
Let $M$ be a smooth manifold, $G$ a Lie group with Lie algebra $\frg$, and $f \from P \to M$ a $G$-principal bundle.
\begin{enumerate}
\item To give a connection on $P$ (in the sense of a $G$-invariant horizontal subbundle of $\T P$) is equivalent to giving a $\frg$-valued $1$-form $\omega$ on $P$ which is $G$-equivariant and whose restriction to the fibres of $f$ equals the Maurer--Cartan form of $G$.
\item If $\omega_1, \omega_2$ are two connections on $P$, then their difference $\omega_1 - \omega_2$ is a $\frg$-valued $G$-equivariant $1$-form on $P$ which is furthermore horizontal (i.e.~it vanishes on the vertical subbundle of $\T P$).
\item To give a $\frg$-valued $G$-equivariant horizontal $1$-form on $P$ is equivalent to giving a map of vector bundles
\[ \T M \lto P \x_G \frg. \]
Hence the set of all connections on $P$ is an affine space over the vector space of $(P \x_G \frg)$-valued $1$-forms on $M$. \qed
\end{enumerate}
\end{prp}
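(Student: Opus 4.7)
These three statements are standard facts from the differential geometry of principal bundles, so the plan is really just to unpack the definitions carefully. The unifying tool is the canonical trivialisation of the vertical subbundle $V \defn \ker(\d f) \subset \T P$: via the fundamental vector fields, the assignment $P \x \frg \to V$, $(p, \xi) \mapsto \xi^\sharp_p$, is an isomorphism of vector bundles which is equivariant for the right $G$-action on $P$ combined with the adjoint action of $G$ on $\frg$.

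For (1), I would start from a $G$-invariant horizontal subbundle $H \subset \T P$ and use the splitting $\T P = H \oplus V$ together with the trivialisation $V \isom P \x \frg$ to define $\omega \from \T P \to \frg$ as the projection onto the vertical summand. That $\omega$ restricts to the Maurer--Cartan form on fibres is essentially the definition of the trivialisation, and $G$-equivariance of $\omega$ reduces to $G$-invariance of the splitting; since $V$ is automatically $G$-invariant, this is equivalent to $G$-invariance of $H$. Conversely, given $\omega$, set $H \defn \ker(\omega)$ and check $G$-invariance from $G$-equivariance. Part (2) is an immediate corollary: since both $\omega_1, \omega_2$ restrict to the Maurer--Cartan form on fibres, their difference vanishes on $V$ and is therefore horizontal, while $G$-equivariance plainly passes to differences.

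For (3), a horizontal $G$-equivariant $\frg$-valued $1$-form $\alpha$ on $P$ factors uniquely through the quotient $\T P / V \isom f^* \T M$, yielding a bundle map $f^* \T M \to P \x \frg$ of bundles on $P$. Using $G$-equivariance of $\alpha$ together with the adjoint action on $\frg$, this map descends to a morphism $\T M \to P \x_G \frg$ of vector bundles on $M$; the inverse construction proceeds by pulling back and composing with the quotient $P \x \frg \to P \x_G \frg$, and one verifies that the two constructions are mutually inverse.

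I do not anticipate any genuine obstacle here. The only mildly fiddly point is translating between the two formulations of $G$-equivariance --- the one on the $1$-form $\omega$, which pairs the pullback under the right $G$-action on $P$ with the adjoint action on $\frg$, and the corresponding invariance of the horizontal distribution $H$ --- but this is a routine verification, essentially obtained by differentiating the identity $R_g^* H = H$ along $g \in G$.
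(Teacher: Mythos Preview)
Your proposal is correct and follows the standard approach; the paper itself gives no proof at all, stating the proposition as a well-known fact and closing it with a \qed{} symbol. Your unpacking of the definitions via the fundamental vector field trivialisation of the vertical bundle is exactly the standard argument one finds in textbooks such as Kobayashi--Nomizu, and there are no gaps.
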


We now revert to the notation used before \cref{conn principal bundle}.
We borrow the following construction from~\cite[Section~8]{Simpson88}, which is necessary to define a uniformizing variation of Hodge structure.

\begin{cons} \label{flat metric}
Let $(P, \theta)$ be a uniformizing system of Hodge bundles on $X$ equipped with a metric $P_H \subset P$, which we denote by $h$.
If $V$ is a polarized Hodge representation of $G_0$ in the sense of~\cite[p.~900]{Simpson88}, then $h$ induces a Hermitian metric (in the usual sense) on the associated vector bundle $P \x_K V$.
In particular, this applies to $V = \frg$.
By abuse of notation, we denote again by $h$ the Hermitian metric on $\cE = P \x_K \frg = P_H \x_{K_0} \frg$.

There is a unique connection $d_h$ on $P_H$ which is compatible with the holomorphic structure of $P$~\cite{AAB00}.
We can push forward $d_h$ to a connection on $R_H \defn P_H \x_{K_0} G_0$, which we still denote by $d_h$.
Let $\overline\theta \from \T X \to P \x_K \frg^{1,-1}$ be the conjugate of $\theta$.
Then $\theta + \overline\theta \from \T X \to P \x_K \frg_0$ and hence by \cref{conn principal bundle}, $D_h \defn d_h + \theta + \overline\theta$ is a connection on $R_H$.
\end{cons}

\begin{dfn}[Uniformizing VHS]
A \emph{uniformizing variation of Hodge structure (uVHS) for $G_0$} is a uniformizing system of Hodge bundles $(P, \theta)$ for $G_0$ together with a metric $h$ such that the connection $D_h$ is flat.
\end{dfn}

If $(P, \theta)$ is a uVHS, then $D_h$ induces a flat connection on $\sE = P \x_K \frg = R_H \x_{G_0} \frg$.
This connection can also be constructed as follows: let $d_h'$ be the Chern connection on $(\sE, h)$, and let $\theta$ be the Higgs field on $\sE$ as explained above.
Let $\theta^h$ be the adjoint of $\theta$ with respect to $h$, i.e.~we have
\[ \langle \theta u, v \rangle_h = \langle u, \theta^h v \rangle_h \]
for all local sections $u, v$ of $\sE$.
Then $D_h' \defn d_h' + \theta + \theta^h$ is the flat connection induced by $D_h$.

\begin{exm} \label{bsd uvhs}
Let $\cD = \factor{G_0}{K_0}$ be a bounded symmetric domain.
Then $G_0 \to \cD$ is a principal $K_0$-bundle, which we denote by $P_{\cD, H}$.
Set $P_\cD \defn P_{\cD, H} \x_{K_0} K$.
Then there is an isomorphism $\theta_\cD \from \T \cD \bij P_\cD \x_K \frg^{-1,1}$, where $K$ acts on $\frg^{-1,1} \subset \frg$ via the adjoint representation.
Therefore $(P_\cD, \theta_\cD)$ is a uniformizing system of Hodge bundles on $\cD$.
The subbundle $P_{\cD, H}$ is a metric $h$ on $(P_\cD, \theta_\cD)$.
The induced connection $D_h$ on $R_H \defn P_{\cD, H} \x_{K_0} G_0 \isom \cD \x G_0$ is given by pulling back the Maurer--Cartan form on $G_0$.
In particular, $D_h$ is flat.
This means that $(P_\cD, \theta_\cD)$ together with the metric $P_{\cD, H}$ is a uniformizing VHS on $\cD$.
\end{exm}

The following proposition has already been mentioned in the introduction.

\begin{prp}[cf.~\protect{\cite[Prop.~9.1]{Simpson88}}] \label{prp uVHS}
Let $X$ be an $n$-dimensional normal projective variety with klt singularities and such that $K_X$ is ample.
Let $\cD$ be a bounded symmetric domain.
The following are equivalent:
\begin{enumerate}
\item\label{uVHS.1} We have $X \isom \factor \cD \Gamma$, where $\Gamma \subset \Aut(\cD)$ is a discrete cocompact subgroup whose action is free in codimension one.
\item\label{uVHS.2} $\reg X$ admits a uniformizing VHS $(P, \theta)$ for some Hodge group $G_0$ of Hermitian type with $\cD \isom \factor{G_0}{K_0}$.
\end{enumerate}
\end{prp}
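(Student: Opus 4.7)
The strategy is to adapt Simpson's original argument~\cite[Prop.~9.1]{Simpson88} to the klt setting by exploiting reflexivity across the singular locus and the singular \kahler--Einstein metric on $\reg X$.

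For the easy direction \lref{uVHS.1}$\Rightarrow$\lref{uVHS.2}, I would start from the tautological uVHS $(P_\cD, \theta_\cD, h_\cD)$ on $\cD$ constructed in \cref{bsd uvhs}. Since this uVHS is $\Aut(\cD)$-equivariant, it is in particular $\Gamma$-equivariant. Let $U \subset \cD$ denote the open subset on which $\Gamma$ acts freely; by assumption $\cD \setminus U$ has codimension at least two. The quotient map $U \to U/\Gamma$ is étale, so $(P_\cD|_U, \theta_\cD|_U, h_\cD|_U)$ descends to a uVHS on $U/\Gamma$, and $U/\Gamma$ sits inside $\reg X$ with complement of codimension $\ge 2$. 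Extending the principal $K$-bundle $P$, the Higgs field $\theta$, and the reduction $P_H$ across this codimension-two locus is a routine application of reflexivity and Hartogs' theorem, and the flatness of $D_h$ is preserved by this extension.

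The substantial direction is \lref{uVHS.2}$\Rightarrow$\lref{uVHS.1}, where I would construct a developing map and show it is an isomorphism. Starting from a uVHS $(P,\theta,h)$ on $\reg X$, the flat $G_0$-bundle $(R_H, D_h)$ produces a monodromy representation $\rho \from \pi_1(\reg X) \to G_0$. On the universal cover $p \from \wt{\reg X} \to \reg X$, the pullback $p^*R_H$ is flatly trivialized, and the $K_0$-reduction $P_H \subset R_H$ pulls back to a $\rho$-equivariant section of the trivial $G_0/K_0$-bundle, i.e., a $\rho$-equivariant holomorphic map
\[ f \from \wt{\reg X} \lto \cD = \factor{G_0}{K_0}. \]
Under the identifications $\T{\reg X} \bij P \x_K \frg^{-1,1}$ and $T_p\cD \bij \frg^{-1,1}$ of \cref{ushb} and \cref{bsd uvhs}, the derivative $df$ is identified with (the pullback of) $\theta$. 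Since $\theta$ is an isomorphism, $f$ is a local biholomorphism.

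The main obstacle is upgrading $f$ to a biholomorphism. The metric $h$ together with $\theta$ equips $\reg X$ with a \kahler metric whose Ricci form matches that of the Bergman metric on $\cD$, so by the uniqueness of singular \kahler--Einstein metrics on $X$~\cite{EGZ09, BermanGuenancia14} it agrees with $\omega_\ke$ up to a universal scalar. Hence $f$ is an isometry (up to scale) onto its image with respect to these \kahler--Einstein metrics. The usual completeness argument does not apply directly, since $(\reg X, \omega_\ke)$ is not complete. I would instead compare volumes: the KE volume of $\reg X$ is finite (because $X$ is projective with $K_X$ ample), whereas any proper open subset of $\cD$ invariant under a discrete subgroup must have the correct volume on the quotient, which forces $f$ to be surjective after passing to the quotient. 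Combined with $f$ being a local biholomorphism and $\cD$ being simply connected, $f$ is an isomorphism. Setting $\Gamma \defn \rho(\pi_1(\reg X))$ then gives an isomorphism $\reg X \isom \factor \cD \Gamma$, where $\Gamma$ is discrete and cocompact; the action is automatically free on the image of $\wt{\reg X} \to \cD$. Extending by Hartogs across the codimension-two singular locus of $X$ yields $X \isom \factor \cD \Gamma$ with $\Gamma$ acting freely in codimension one, as required.
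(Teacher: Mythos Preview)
Your easy direction \lref{uVHS.1}$\Rightarrow$\lref{uVHS.2} is essentially the same as the paper's, though the extension step across codimension two is unnecessary: if $U \subset \cD$ is the free locus, then $U/\Gamma$ is already all of $\reg X$, so the descended uVHS lives on $\reg X$ directly.

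The hard direction \lref{uVHS.2}$\Rightarrow$\lref{uVHS.1} has a genuine gap. You correctly identify that the developing map $f \from \wt{\reg X} \to \cD$ is a local biholomorphism and that the usual completeness argument fails, but your proposed volume substitute does not work as stated. Finite volume of $(\reg X, \omega_\ke)$ gives no control over the image of $f$: a priori $f(\wt{\reg X})$ could be a proper open subset of $\cD$, and there is no $\Gamma$-invariant set in $\cD$ to compare volumes against until you already know $f$ is surjective. Moreover, even granting surjectivity, a local biholomorphism onto a simply connected target need not be injective without properness or completeness, and you have neither. Your appeal to uniqueness of the singular KE metric is also incomplete: you would first need to show that the metric induced by $h$ on $\reg X$ has the correct global behaviour (finite energy, bounded potential) to fall into the uniqueness class of~\cite{EGZ09, BermanGuenancia14}.

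The paper bypasses all of this by a different route: rather than analyzing the developing map on $\wt{\reg X}$, it observes that $\T{\reg X}$ sits as a direct summand in the flat bundle $\sE = P \x_K \frg$, then invokes~\cite[Thm.~1.14]{GKP16} to produce a finite quasi-\'etale cover $\gamma \from Y \to X$ on which $\gamma^*\sE$ extends to a flat locally free sheaf over all of $Y$. This forces $\T Y$ to be locally free, hence $Y$ is smooth by the Lipman--Zariski conjecture for klt spaces~\cite{GK13}. Now Simpson's original~\cite[Prop.~9.1]{Simpson88} applies to the smooth projective $Y$, giving $\wt Y \isom \cD$, and one concludes via Selberg's lemma. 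The point is that the passage to a smooth cover via the flat extension theorem and Lipman--Zariski is what replaces completeness; your developing-map approach does not have an analogous mechanism.
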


\begin{proof}
``\lref{uVHS.1} $\imp$ \lref{uVHS.2}'': Let $G_0 \defn \Aut(\cD)$, then $G_0$ is a Hodge group of Hermitian type and $\cD \isom \factor{G_0}{K_0}$.
Consider the uniformizing VHS $(P_\cD, \theta_\cD)$ on $\cD$ given as in \cref{bsd uvhs}.
Then in particular we have an isomorphism $\T \cD \isom G_0 \x_{K_0} \frg^{-1,1}$.

Let $\pi \from \cD \to X$ be the quotient map, and set $\cD^\circ \defn \pi\inv(\reg X)$.
The complement of $\cD^\circ$ in $\cD$ has complex codimension at least two.
Therefore the restriction $\pi\big|_{\cD^\circ} \from \cD^\circ \to \reg X$ is the universal cover of $\reg X$, and $\reg X \isom \factor{\cD^\circ}\Gamma$.
In particular, note that $\pi_1(\reg X) = \Gamma$.

Restrict $(P_\cD, \theta_\cD)$ to a uniformizing VHS $(P_{\cD^\circ}, \theta_{\cD^\circ})$ on $\cD^\circ$.
Consider the action of $\Gamma$ on $G_0$ given by multiplication on the left.
This stabilizes $G_0\big|_{\cD^\circ}$, hence the principal bundle $G_0\big|_{\cD^\circ}$ is $\Gamma$-equivariant.
Therefore it descends to $\reg X$.
That is, there is a principal $K_0$-bundle~$P_0$ on $\reg X$ such that $\pi^* P_0 \isom G_0\big|_{\cD^\circ}$ as principal $K_0$-bundles on $\cD^\circ$.
It follows from the proof of~\cite[Prop.~9.1]{Simpson88} that the isomorphism $\theta_{\cD^\circ}$ is also $\Gamma$-equivariant.
Therefore on $\reg X$ there is an isomorphism $\theta \from \T{\reg X} \bij P_0 \x_{K_0} \frg^{-1,1}$.
Setting $P \defn P_0 \x_{K_0} K$, we have $\theta \from \T{\reg X} \bij P \x_K \frg^{-1,1}$.
Hence $(P, \theta)$ is a uniformizing system of Hodge bundles on $\reg X$.

We must show that the metric $P_0 \subset P$ makes $(P, \theta)$ into a uniformizing VHS.
The connection $D_h$ on $R \defn P_0 \x_{K_0} G_0$ from \cref{flat metric} pulls back under the map $\pi$ to a connection $D_{\cD^\circ}$ on the trivial bundle $R_{\cD^\circ} \defn G_0\big|_{\cD^\circ} \x_{K_0} G_0$ on $\cD^\circ$, and is constructed in the same way.
Since $D_{\cD^\circ}$ is flat, it follows that $D_h$ is flat.
Thus $(P, \theta)$ is a uniformizing VHS on $\reg X$, as desired.

``\lref{uVHS.2} $\imp$ \lref{uVHS.1}'': By the definition of a uniformizing VHS, $P$ is a principal $K$-bundle, $\theta$ is an isomorphism $\T{\reg X} \bij P \x_K \frg^{-1,1}$ and there is a principal $K_0$-bundle $P_H \subset P$ and a flat connection on the principal $G_0$-bundle $R_H \defn P_H \x_{K_0} G_0$.
This means that there is also a flat connection on the associated vector bundle $\sE \defn R_H \x_{G_0} \frg = P \x_K \frg$ on $\reg X$, which contains $\T{\reg X}$ as a direct summand.

By~\cite[Thm.~1.14]{GKP16}, there is a finite \qe Galois cover $\gamma \from Y \to X$ such that $\gamma^* \sE$ extends to a flat locally free sheaf $\sE_Y$ on all of $Y$, which then contains $\T Y$ as a direct summand.
Therefore $\T Y$ is locally free, and $Y$ is smooth by the \lz for klt spaces~\cite{GK13}.

We may now apply~\cite[Prop.~9.1]{Simpson88} and conclude that the universal cover $\wt Y$ of $Y$ is isomorphic to $\cD$.
This implies~\lref{uVHS.1} by standard arguments involving Selberg's lemma, cf.~the proof of~\cite[Thm.~1.3]{GKPT1_ENS}.
\end{proof}

The following notions appear in the main results.
They are taken from~\cite{CataneseDiScala13} and~\cite{CataneseDiScala14}.
Let $X$ be an $n$-dimensional normal projective variety with klt singularities and such that $K_X$ is ample.

\begin{dfn} \label{def semispecial}
A \emph{semispecial tensor} on $X$ is a nonzero section
\[ 0 \ne \psi \in \HH0.X.\Sym^{n}(\Omegap X1)(-K_X) [\tensor] \eta. \]
for some rank one reflexive sheaf $\eta$ on $X$ such that $\eta^{[2]} \isom \O X$.
\end{dfn}

\begin{dfn} \label{def slope zero}
A \emph{slope zero tensor} on $X$ is a nonzero section
\[ 0 \ne \psi \in \HH0.X.\Sym^{mn}(\Omegap X1)(-mK_X). \]
for some positive integer $m > 0$.
\end{dfn}

\begin{dfn}[First Mok characteristic cone] \label{def mok}
Let $T$ be a finite-dimensional complex vector space and $\sigma \in \End T \tensor T\dual.$.
The \emph{first Mok characteristic cone} $\mathcal{CS}$ of $\sigma$ is defined as
\[ \set{ t \in T \;\;\big|\;\; \exists \alpha \in T\dual \setminus \set0 \!: t \tensor \alpha \in \ker(\sigma) }. \]
\end{dfn}

\subsection{Classification of irreducible Hermitian symmetric spaces}

For later reference, \cref{ihss classification} summarizes the classification of irreducible Hermitian symmetric spaces and their most important invariants.
This information can be found in~\cite{Helgason78} or~\cite{Besse87} and in~\cite[p.~525]{FKKLR00}.

\begin{rem-plain}
In the first column, we have used the original numbering scheme by Cartan.
Modern references such as~\cite{CataneseDiScala13} and~\cite{FKKLR00} switch types III and IV.
\end{rem-plain}

\begin{dfn}[Domains of tube type]
A bounded symmetric domain $\cD$ is said to be \emph{of tube type} if it is biholomorphic to a \emph{tube domain} $\Omega = V \oplus \mathrm i \sC$, where $V$ is a real vector space and $\sC \subset V$ is an open self-dual convex cone containing no lines.
\end{dfn}

\renewcommand{\arraystretch}{2}

\begin{table}[t]
\centering
\begin{tabular}{|c|c|c|c|c|c|}
\hline
\cite{Cartan35} & \cite{Helgason78} & \textbf{Symmetric space} & \textbf{Dimension} & \textbf{Rank} & \textbf{Tube type} \\
\hline \hline
$\mathrm I_{p, q}$ & A~III & $\factor{\SU{p, q}}{\mathrm S \big( \U p \x \U q \big)}$ & $pq$ & $\min(p, q)$ & $\gdw p = q$ \\
$\mathrm{II}_n$ & D~III & $\factor{\mathrm{SO}^*(2n)}{\U n}$ & $\frac12 n(n - 1)$ & $\rd n/2.$ & $\gdw$ $n$ even \\
$\mathrm{III}_n$ & BD~I ($q = 2$) & $\factor{\mathrm{SO}_0(2, n)}{\SO 2 \x \SO n}$ & $n$ & $\min(n, 2)$ & yes \\
$\mathrm{IV}_{\!n}$ & C~I & $\factor{\mathrm{Sp}(2n, \R)}{\U n}$ & $\frac12 n(n + 1)$& $n$ & yes \\
$\mathrm V$ & E~III & $\factor{E_{6(-14)}}{\SO{10} \x \SO 2}$ & $16$ & 2 & no \\
$\mathrm{VI}$ & E~VII & $\factor{E_{7(-25)}}{E_{6(-78)} \x \SO 2}$ & $27$ & 3 & yes \\
\hline
\end{tabular}
\vspace{.1em}
\caption{Classification of irreducible Hermitian symmetric spaces}
\label{ihss classification}
\end{table}

\begin{prp} \label{rank one spaces}
The irreducible Hermitian symmetric spaces of rank one are exactly the unit balls $\bB^n$ with $n \ge 1$.
In particular, the only tube domain of rank one is the unit disc $\bB^1$.
\end{prp}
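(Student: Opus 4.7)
The plan is to read off the statement directly from Table~\ref{ihss classification}, by going through each Cartan type and identifying the rank-one cases.

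First, I would go through the \textbf{Rank} column. Type $\mathrm I_{p,q}$ has rank $\min(p,q)$, so rank one forces $p = 1$ or $q = 1$, which gives the classical complex balls $\bB^{\max(p,q)} = \SU{p, 1}/\mathrm S(\U p \x \U 1)$. Types $\mathrm V$ and $\mathrm{VI}$ have rank $2$ and $3$ respectively and are therefore excluded. The remaining types need slightly more care because of low-dimensional coincidences: $\mathrm{II}_n$ has rank $\rd n/2.$, so rank one only for $n = 2, 3$, of respective complex dimensions $1$ and $3$; $\mathrm{III}_n$ has rank $\min(n, 2)$, so rank one only for $n = 1$, of dimension $1$; $\mathrm{IV}_n$ has rank $n$, so rank one only for $n = 1$, of dimension $1$. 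In each of these remaining cases, I would identify the space with a ball using the standard isomorphisms of low-dimensional classical Lie groups (e.g.\ $\mathrm{SO}^*(4) \isom \SU{1,1} \x \SU 2$ up to isogeny, giving $\mathrm{II}_2 \isom \bB^1$, and $\mathrm{SO}^*(6) \isom \SU{3, 1}$ up to isogeny, giving $\mathrm{II}_3 \isom \bB^3$; similarly $\mathrm{III}_1 \isom \mathrm{IV}_1 \isom \bB^1$ since all are simply-connected rank-one Hermitian symmetric spaces of complex dimension~$1$, hence biholomorphic to the unit disc). Alternatively, one can argue uniformly: an irreducible Hermitian symmetric space of rank one must have a transitive automorphism group acting with compact isotropy on a bounded domain in some $\C^n$, and by Cartan's classification of bounded homogeneous symmetric domains this forces it to be $\bB^n$.

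For the second assertion, I would use the tube-type column. Among the rank-one entries identified above, only $\mathrm I_{1,1}$ has $p = q$, and $\mathrm{III}_1$, $\mathrm{IV}_1$ are trivially of tube type but all coincide with $\bB^1$; any ball $\bB^n$ with $n \ge 2$ is $\mathrm I_{n,1}$, which is of tube type only when $n = 1$. Thus the only rank-one tube domain is $\bB^1$, the unit disc.

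The argument is essentially bookkeeping; the only mildly delicate step is handling the low-dimensional coincidences $\mathrm{II}_2$, $\mathrm{II}_3$, $\mathrm{III}_1$, $\mathrm{IV}_1$, and I would simply cite these as classical, or invoke the uniform argument via Cartan's classification of bounded homogeneous domains to avoid case-checking.
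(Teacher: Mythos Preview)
Your proposal is correct and follows essentially the same approach as the paper: both read the rank column of \cref{ihss classification}, enumerate the rank-one cases ($\mathrm I_{1,q}$, $\mathrm{II}_2$, $\mathrm{II}_3$, $\mathrm{III}_1$, $\mathrm{IV}_1$), identify each with a ball via the standard low-dimensional coincidences, and deduce the tube-type statement from the table. The only differences are cosmetic---you supply a bit more justification for the coincidences and offer an alternative uniform argument, whereas the paper simply cites Helgason for $\mathrm{II}_3 \isom \bB^3$.
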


\begin{proof}
According to \cref{ihss classification}, the rank equals one exactly in the following cases:
\begin{itemize}
\item $\mathrm I_{1, q}$ with $q \ge 1$, which gives $\bB^q$
\item $\mathrm{II}_2$, which gives $\bB^1$
\item $\mathrm{II}_3$, which gives $\bB^3$~\cite[p.~519, item~(vii)]{Helgason78}
\item $\mathrm{III}_1$, which gives $\bB^1$
\item $\mathrm{IV}_1$, which gives $\bB^1$
\end{itemize}
The second statement follows because $\bB^n$ is not of tube type for $n \ge 2$ (also by \cref{ihss classification}).
\end{proof}

\begin{prp} \label{dim and K_0 uniqueness}
An irreducible Hermitian symmetric space $\factor{G_0}{K_0}$ is uniquely determined by its dimension and the Lie algebra $\frk_0$ of $K_0$.
\end{prp}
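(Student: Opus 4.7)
The strategy is direct inspection of the classification in \cref{ihss classification}, using the dimension constraint coming from the Cartan decomposition. First I would note that the symmetric pair $(\frg_0, \frk_0)$ has a Cartan decomposition $\frg_0 = \frk_0 \oplus \frm_0$ with $\dim_\R \frm_0 = 2 \dim_\C \factor{G_0}{K_0}$, so that the pair $(\dim \factor{G_0}{K_0}, \dim \frk_0)$ already determines $\dim_\R \frg_0$. Since an irreducible Hermitian symmetric space of noncompact type is determined up to isomorphism by its associated simple real Lie algebra $\frg_0$ of Hermitian type (with maximal compact subalgebra unique up to conjugacy), it is enough to show that such a $\frg_0$ is determined as an abstract Lie algebra by the pair $(\dim_\R \frg_0, \frk_0)$.

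The proof then proceeds by case analysis on the isomorphism class of $\frk_0$. If $\frk_0$ has a simple factor of exceptional type, \cref{ihss classification} forces this factor to be the compact form of $\mathfrak{e}_6$, and hence $\frg_0 = \mathfrak{e}_{7(-25)}$ is the unique candidate. If $\frk_0 \isom \mathfrak{so}(10) \oplus \mathfrak{so}(2)$, the two candidates $\mathfrak{e}_{6(-14)}$ and $\mathfrak{so}(2, 10)$ are distinguished by $\dim_\C \cD = 16$ versus $10$. In the remaining (purely classical) cases, $\frg_0$ lies among $\mathfrak{su}(p, q)$, $\mathfrak{so}^*(2n)$, $\mathfrak{so}(2, n)$, and $\mathfrak{sp}(2n, \R)$; I would enumerate, for each admissible $\frk_0$ of the form $\frs(\fru(p) \oplus \fru(q))$, $\fru(n)$, or $\mathfrak{so}(2) \oplus \mathfrak{so}(n)$, the finitely many candidate $\frg_0$ and compare the corresponding dimension formulas $pq$, $\tfrac{n(n-1)}{2}$, $n$, and $\tfrac{n(n+1)}{2}$.

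The main obstacle is that exceptional isomorphisms among low-dimensional classical Lie algebras (such as $\mathfrak{so}(6) \isom \mathfrak{su}(4)$ and $\mathfrak{so}(4) \isom \mathfrak{su}(2) \oplus \mathfrak{su}(2)$) cause some distinct entries of \cref{ihss classification} to describe the same symmetric space, for example $II_3 \isom I_{1,3}$, $II_4 \isom III_6$, $I_{2,2} \isom III_4$, and $III_3 \isom IV_2$. I would verify that whenever two table entries share the same pair $(\dim \cD, \frk_0)$, the underlying symmetric spaces actually coincide via the accidental isomorphisms $\mathfrak{so}^*(6) \isom \mathfrak{su}(3, 1)$, $\mathfrak{so}^*(8) \isom \mathfrak{so}(2, 6)$, $\mathfrak{su}(2, 2) \isom \mathfrak{so}(2, 4)$, and $\mathfrak{so}(2, 3) \isom \mathfrak{sp}(4, \R)$, so that the claim remains consistent. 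All other pairs of candidates are separated either by the isomorphism class of $\frk_0$ (since $\fru(n)$ determines $n$ and $\mathfrak{su}(p) \oplus \mathfrak{su}(q)$ determines $\{p, q\}$) or by the dimension of the symmetric space, yielding the desired uniqueness.
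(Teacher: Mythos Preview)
Your proposal is correct and follows essentially the same route as the paper: direct inspection of the classification in \cref{ihss classification}, separating the entries by the isomorphism type of $\frk_0$ (equivalently, by $[\frk_0,\frk_0]$ in the paper's formulation) and then using the dimension to resolve the remaining ambiguities, with the handful of apparent collisions accounted for by the well-known accidental isomorphisms of low-rank classical Lie algebras. One small remark: you include the coincidence $I_{2,2} \isom III_4$ (via $\mathfrak{su}(2,2) \isom \mathfrak{so}(2,4)$, with $[\frk_0,\frk_0] \isom \mathfrak{so}(4) \isom \mathfrak{su}(2)\oplus\mathfrak{su}(2)$), which the paper's list of ``potential collisions'' does not mention explicitly---so your case analysis is in fact slightly more complete on this point.
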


\begin{proof}
\begin{table}[t]
\centering
\begin{tabular}{|c|c|c|c|c|c|}
\hline
\cite{Cartan35} & \cite{Helgason78} & $\dim_\C \factor{G_0}{K_0}$ & $\dim_\R \frk_0$ & $[\frk_0, \frk_0]$ \\
\hline \hline
$\mathrm I_{p, q}$ & A~III & $pq$ & $p^2 + q^2 - 1$ & $\mathfrak{su}(p) \oplus \mathfrak{su}(q)$ \\
$\mathrm{II}_n$ & D~III & $\frac12 n(n - 1)$ & $n^2$ & $\mathfrak{su}(n)$ \\
$\mathrm{III}_n$ & BD~I ($q = 2$) & $n$ & $\frac12 n(n - 1) + 1$ & $\mathfrak{so}(n)$ \\
$\mathrm{IV}_{\!n}$ & C~I & $\frac12 n(n + 1)$ & $n^2$ & $\mathfrak{su}(n)$ \\
$\mathrm V$ & E~III & $16$ & $46$ & $\mathfrak{so}(10)$ \\
$\mathrm{VI}$ & E~VII & $27$ & $79$ & $\mathfrak e_6$ \\
\hline
\end{tabular}
\vspace{.5em}
\caption{Irreducible Hermitian symmetric spaces with dimensions and commutator subalgebra}
\label{dim and K_0 table}
\end{table}

From \cref{dim and K_0 table}, we see that $[\frk_0, \frk_0]$ is always semisimple (this is clear because $K_0$ is compact), and it is not simple if and only if we are in case~$\mathrm I_{p, q}$ with $p, q \ge 2$.
From this observation, one can check that the only potential ``collisions'' are the following:
\begin{itemize}
\item $\mathrm I_{1, q}$ and $\mathrm{II}_n$ with $q = n = \frac12 n(n - 1)$, i.e.~$n = 3$.
But $\mathrm{II}_3$ is the unit ball $\bB^3$ (see above), so they are actually isomorphic.
\item $\mathrm I_{1, q}$ and $\mathrm{IV}_n$ with $q = n = \frac12 n(n + 1)$, i.e.~$n = 1$.
But $\mathrm{IV}_1$ is the unit ball $\bB^1$.
\item $\mathrm{II}_4$ and $\mathrm{III}_6$, which are isomorphic~\cite[p.~519, item~(viii)]{Helgason78}.
\item $\mathrm{III}_3$ and $\mathrm{IV}_2$, which are isomorphic~\cite[p.~519, item~(ii)]{Helgason78}.
\end{itemize}
This proves the proposition.
\end{proof}

\section{Compactness of the holonomy group}

In this section we prove \cref{hol cover intro} (as \cref{hol cover}).
First we set up some notation.

\begin{setup} \label{setup holonomy}
Let $X$ be an $n$-dimensional normal projective variety with klt singularities and ample canonical divisor $K_X$.
Let $\omega_\ke$ be as in~\cite[Setup~3.2]{GrafPatel25}.
Write $g_\ke$ for the associated Riemannian metric on $\reg X$ and $h_\ke$ for the associated Hermitian metric on $\T{\reg X}$.
Fix, once and for all, a smooth point $x \in \reg X$, and consider the tangent space $V \defn T_x X$ at that point.
We write
\begin{align*}
H \defn \operatorname{Hol}(\reg X, g_\ke, x) & \subset \U{V, h_{\ke, x}} \isom \U n \quad \text{and} \\
H^\circ \defn \operatorname{Hol}^\circ(\reg X, g_\ke, x) & \subset \U{V, h_{\ke, x}}
\end{align*}
for the corresponding (restricted) holonomy group.
Recall from~\cite[Cor.~10.41]{Besse87} and~\cite[Lemma~5.2]{GrafPatel25} that there are decompositions
\begin{equation} \label{canonical decomposition}
V = V_1 \oplus \cdots \oplus V_m \quad \text{and} \quad H^\circ = H_1^\circ \x \cdots \x H_m^\circ
\end{equation}
such that for each $1 \le i \le m$, the factor $H_i^\circ$ acts irreducibly and non-trivially on~$V_i$ and trivially on $V_j$ for $j \ne i$.
\end{setup}

\begin{thm}[Holonomy cover] \label{hol cover}
Let $X$ be as above.
\begin{enumerate}
\item The holonomy group $H$ of $(\reg X, \omega_\ke)$ is compact.
\item There is a finite \qe Galois cover $\gamma \from Y \to X$ such that the holonomy group of $(\reg Y, \omega_{\ke, Y})$ is connected.
\end{enumerate}
\end{thm}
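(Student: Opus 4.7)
The strategy is to reduce the compactness of $H$ to the finiteness of its component group $H/H^\circ$. Since $H \subset \U n$ is contained in a compact group and $H^\circ$ is closed in $\U n$ (as a connected Lie subgroup), a finite union of $H^\circ$-cosets inside $\U n$ is automatically closed, and hence compact. Because every element of $H$ normalizes $H^\circ$, it is enough to prove the stronger statement that the normalizer $N_{\U n}(H^\circ)$ contains $H^\circ$ with finite index; as this normalizer is a closed subgroup of the compact group $\U n$, finiteness of the component group amounts to the equality of dimensions $\dim N_{\U n}(H^\circ) = \dim H^\circ$.

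Using the canonical decomposition~\eqref{canonical decomposition}, I would first observe that $V = V_1 \oplus \cdots \oplus V_m$ is uniquely characterised by $H^\circ$ as its isotypical decomposition, so any element normalising $H^\circ$ must permute the factors $V_i$. This yields a homomorphism $N_{\U n}(H^\circ) \to \mathfrak S_m$ whose kernel embeds into the product $\prod_{i=1}^m N_{\U{V_i}}(H_i^\circ)$. The task is thereby reduced to showing, for each $i$, that the connected centraliser $Z_{\U{V_i}}(H_i^\circ)^\circ$ is contained in $H_i^\circ$; once this holds, $N_{\U{V_i}}(H_i^\circ)^\circ = H_i^\circ$ and the quotient $N_{\U{V_i}}(H_i^\circ)/H_i^\circ$ is both compact and discrete, hence finite.

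To establish this inclusion I would invoke a Berger-type classification of irreducible holonomy groups, adapted to the singular Kähler--Einstein setting by means of the Bochner principle for reflexive tensors on $X$. Since $K_X$ is ample and $\omega_\ke$ has negative Ricci curvature, the expected outcome is that each factor $H_i^\circ$ is either the full unitary group $\U{V_i}$ or the isotropy $K_{0,i}$ of an irreducible bounded symmetric domain of Hermitian type that is not a higher-dimensional unit ball. In both cases Schur's lemma applied to the complex-irreducible action of $H_i^\circ$ on $V_i$ pins down $Z_{\U{V_i}}(H_i^\circ) = \U 1 \cdot \id_{V_i}$, and this scalar circle lies inside $H_i^\circ$: trivially when $H_i^\circ = \U{V_i}$, and otherwise as the one-dimensional centre of $K_{0,i}$ generated by the complex structure on the Hermitian symmetric tangent space. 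Combining this with the previous reduction proves~(1).

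For part~(2), the finite quotient $H/H^\circ$ corresponds, via parallel transport around loops based at $x$, to a finite-index subgroup of $\pi_1(\reg X)$, giving a finite étale cover $Y^\circ \to \reg X$ on which the pulled-back holonomy has vanishing component group. Extending $Y^\circ \to \reg X$ to a finite \qe cover of $X$ by taking the normalisation of $X$ in the function field of $Y^\circ$, and then passing to the Galois closure, yields the desired $\gamma \from Y \to X$ on which the holonomy is connected. I expect the main obstacle to lie entirely in the Berger-type classification step: the classical Berger theorem presupposes completeness, which fails on $\reg X$ (as stressed in the remark following the theorem), so the argument must be replaced by a Bochner-principle computation that genuinely exploits the Kähler structure of the potential holonomy representations. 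This is also why the method does not appear to extend beyond the Hermitian symmetric setting.
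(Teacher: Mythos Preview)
Your strategy is essentially the paper's: reduce to $[N_{\U V}(H^\circ) : H^\circ] < \infty$, pass to the irreducible factors $H_i^\circ \subset \U{V_i}$, invoke Berger's list to get $H_i^\circ = \U{V_i}$ or the holonomy of an irreducible Hermitian symmetric space, and then use Schur's lemma together with the fact that such an isotropy group has one-dimensional centre acting as $\U1 \cdot \id_{V_i}$. The endgame differs slightly. You argue that $C^\circ \subset H_i^\circ$ forces $N^\circ = H_i^\circ$ directly, whereas the paper goes through an exact sequence $1 \to C/Z(H_i^\circ) \to N/H_i^\circ \to \Out(H_i^\circ)$ together with a separate proposition that $\Out(G)$ is finite for a compact connected Lie group $G$ precisely when $\dim Z(G) \le 1$. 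Your route is valid and arguably more direct, but you should supply the step $\operatorname{Lie}(N) = \mathfrak{h}_i + \operatorname{Lie}(C)$: choose an $H_i^\circ$-invariant complement $\mathfrak{m}$ to $\mathfrak{h}_i$ in $\mathfrak{u}(V_i)$ and observe that the $\mathfrak{m}$-component of any element of $\operatorname{Lie}(N)$ must centralise $\mathfrak{h}_i$. Part~(2) is handled identically in both arguments.

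One correction: the obstacle you anticipate is not there. Berger's classification of irreducible restricted holonomy representations (the paper simply cites~\cite[Cor.~10.92]{Besse87}) is an algebraic statement about which connected groups can occur and does not presuppose completeness; the Bochner principle plays no role at this stage. The remark after the theorem in the introduction explains why Wolf's result for \emph{compact} locally symmetric spaces cannot be invoked directly---it is not a caveat about Berger's theorem. So the proof is in fact easier than you expect, and the ``not a higher-dimensional ball'' clause in your dichotomy is unnecessary (such isotropy is already $\U{V_i}$).
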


\begin{proof}
It is sufficient to show that the index of the identity component $H^\circ$ in $H$ is finite, i.e.~$[H : H^\circ] < \infty$.
For then $H$ is clearly compact ($H^\circ$ being compact), and the kernel of the natural surjection $\pi_1(\reg X) \surj \factor HH^\circ$ will yield the desired finite cover~$\gamma$.

In order to show that $[H : H^\circ] < \infty$, it is sufficient to show that the normalizer $N_{\U V}(H^\circ)$ is a finite extension of $H^\circ$, since $H \subset N(H^\circ)$.
By~\cite[Lemma~5.3]{GrafPatel25}, it is in turn sufficient to show that $N_{\U{V_i}}(H_i^\circ)$ is a finite extension of $H_i^\circ$ for each $1 \le i \le m$.
We will therefore assume from now on that $m = 1$, i.e.~that $H^\circ$ acts irreducibly on $V$.

According to~\cite[Cor.~10.92]{Besse87}, either $H^\circ = \U V$ or $(\reg X, g_\ke)$ is locally symmetric.
In the former case, the claim is clear, so we may assume that we are in the second case.
That is, $H^\circ$ is the holonomy of an irreducible simply connected Hermitian symmetric space $\factor G{H^\circ}$.

The key fact in dealing with this situation is that the center $Z(H^\circ)$ is exactly one-dimensional.
This follows from~\cite[App.~5, Lemma 2(3)]{KobayashiNomizu96} and~\cite[Ch.~XI, Thm.~9.6(1)]{KobayashiNomizu96II}.
But it is also possible to verify this claim case by case, using the classification of symmetric spaces~\cite[Table~3 on p.~315]{Besse87}.

By Schur's lemma, we have
\[ Z(H^\circ) = \U1 \cdot \id_V \defn \set{ \lambda \cdot \id_V \;\;\big|\;\; |\lambda| = 1 } \subset \U V. \]
Let $C = C_{\U V}(H^\circ)$ be the centralizer of $H^\circ$.
By \cref{Out finite} and \cref{N/H ses} below, it is sufficient to show that $\factor C{Z(H^\circ)}$ is finite.
But by Schur's lemma again, $C = \U1 \cdot \id_V$.
So $\factor C{Z(H^\circ)}$ is even trivial.
\end{proof}

\begin{prp} \label{Out finite}
Let $G$ be a compact connected Lie group. Then the outer automorphism group
\[ \Out(G) \defn \factor{\Aut(G)}{\Inn(G)} \]
is finite if and only if the center $Z(G)$ has dimension at most one.
\end{prp}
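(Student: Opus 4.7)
The plan is to reduce the computation of $\Out(G)$ to the well-understood automorphism group of a torus via the structure theory of compact connected Lie groups. Write $T \defn Z(G)^\circ$ for the identity component of the center, a torus of dimension $d \defn \dim Z(G)$, and let $\wt S$ be the simply connected cover of the derived subgroup $[G, G]$. Because $G = [G, G] \cdot Z(G)^\circ$ (the reductive decomposition), the multiplication map $\wt G \defn T \x \wt S \to G$ is a surjective isogeny with finite central kernel $\Lambda \subset T \x Z(\wt S)$. A short computation, using that $\wt S$ is perfect and that $\mathrm{Hom}(T, Z(\wt S)) = 0$ (since $T$ is connected and $Z(\wt S)$ is finite), shows $\Aut(\wt G) = \Aut(T) \x \Aut(\wt S)$. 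Since $T$ and $[G, G]$ are characteristic in $G$, every $\phi \in \Aut(G)$ lifts uniquely to some $\wt\phi \in \Aut(\wt G)$ preserving $\Lambda$, producing an embedding $\Aut(G) \inj \Aut(T) \x \Aut(\wt S)$.

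I would then observe that $\Aut(T) = \GL{d}{\Z}$ is discrete while $\Aut(\wt S)^\circ = \Inn(\wt S)$ with $\Out(\wt S)$ finite (standard facts for compact semisimple Lie groups). It follows that $\Aut(\wt G)^\circ = \set e \x \Inn(\wt S)$, which coincides with the image of $\Inn(G)$ under the embedding above, since conjugation by an element of $\wt G$ depends only on its $\wt S$-component. Hence $\Aut(G)^\circ = \Inn(G)$, and $\Out(G) = \pi_0(\Aut(G))$ fits into an extension $1 \to K \to \Out(G) \to H \to 1$ in which $K$ is a subgroup of the finite group $\Out(\wt S)$ and $H$ is a subgroup of $\GL{d}{\Z}$. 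Consequently $\Out(G)$ is finite if and only if $H$ is.

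If $d \le 1$, then $\GL{d}{\Z}$ itself is finite, so $H$ and hence $\Out(G)$ are finite. Conversely, suppose $d \ge 2$; I would exhibit an infinite subgroup of $H$ as follows. Let $p_T \from Z(\wt S) \x T \to T$ be the projection, and let $H_0 \subset \Aut(T)$ be the subgroup of those $\alpha$ fixing the finite set $p_T(\Lambda) \subset T$ pointwise. For each $\alpha \in H_0$, the pair $(\alpha, \id_{\wt S}) \in \Aut(T) \x \Aut(\wt S)$ preserves $\Lambda$ and thus lies in $\Aut(G)$, with image $\alpha \in H$. Since $p_T(\Lambda)$ is finite, $H_0$ has finite index in $\GL{d}{\Z}$ and is therefore infinite for $d \ge 2$, so $\Out(G)$ is infinite. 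The most delicate point is the identification $\Aut(G)^\circ = \Inn(G)$, which requires carefully matching the inner automorphism groups on both sides of the isogeny $\wt G \to G$; once that is in place, the rest of the argument is essentially careful bookkeeping.
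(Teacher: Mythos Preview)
Your proposal is correct and follows essentially the same strategy as the paper: pass to the finite cover $\wt G = T \times \wt S$ with $\wt S$ simply connected semisimple, use $\Aut(\wt G) = \Aut(T) \times \Aut(\wt S)$ together with the finiteness of $\Out(\wt S)$, and for $d \ge 2$ produce infinitely many outer automorphisms from a finite-index subgroup of $\GL d\Z$ fixing the projection of the kernel to $T$. The only organizational difference is that you package both directions into a single exact sequence $1 \to K \to \Out(G) \to H \to 1$ (using that $T$ and $[G,G]$ are characteristic to lift automorphisms for all $d$), whereas the paper treats the two implications separately and, for $k \le 1$, lifts automorphisms via a fundamental-group argument; your ``delicate point'' $\Aut(G)^\circ = \Inn(G)$ corresponds to the paper's Claim establishing the injection $\Out(G) \hookrightarrow \Out(\wt G)$.
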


\begin{proof}
Let $k \defn \dim Z(G)$.
Since $G$ is compact and connected, it has a finite cover $p \from \wt G \to G$ with
\[ \wt G = G' \x T, \]
where $G'$ is a compact connected semisimple Lie group and $T \isom \bT^k \defn (S^1)^k$ is a $k$-dimensional torus~\cite[Thm.~4.29]{Knapp02}.
The kernel of $p$ is a finite central subgroup $D \subset Z(\wt G)$.

\medskip \noindent
``$\Rightarrow$'': Suppose that $k \ge 2$.
Recall that an automorphism $\phi \in \Aut(\wt G)$ descends to an automorphism of $G$ if and only if it stabilizes $D$, i.e.~if $\phi(D) = D$.

\begin{clm} \label{Aut_D(T)}
The subgroup
\[ \Aut_D(T) \defn \set{ \phi \in \Aut(T) \;\;\big|\;\; \text{$\id_{G'} \x \phi$ stabilizes $D$} } \]
has finite index in $\Aut(T)$.
\end{clm}

\begin{proof}
Let $D_T$ denote the projection of $D \subset \wt G$ to the second factor $T$.
Consider the subgroup
\[ H \defn \set{ \phi \in \Aut(T) \;\;\big|\;\; \text{$\phi$ fixes $D_T$ pointwise} }. \]
Clearly, $H \subset \Aut_D(T)$ and hence it suffices to show that $H$ has finite index in $\Aut(T)$.
Since there are only finitely many torsion points of any given order in $T$, fixing a torsion point defines a finite index subgroup of $\Aut(T)$ by the orbit-stabilizer theorem.
But $D_T$ consists of finitely many torsion points of $T$.
Therefore $H$ is a finite intersection of finite index subgroups, hence itself has finite index.
\end{proof}

The automorphism group of $T$ is $\Aut(T) = \GL k\Z$, which is infinite since $k \ge 2$.
By \cref{Aut_D(T)}, also $\Aut_D(T)$ is infinite.
We define a map
\begin{equation} \label{444}
\Aut_D(T) \lto \Out(G)
\end{equation}
by sending $\phi \in \Aut_D(T)$ to the coset of $\phi_G$, the automorphism of $G$ induced by $\id_{G'} \x \phi$.
If $\phi_G$ is an inner automorphism, i.e.~$\phi_G = \operatorname{Int}_g$ for some $g \in G$, then so is $\id_{G'} \x \phi = \operatorname{Int}_{\wt g}$, where $\wt g \in p\inv(g)$.
Since inner automorphisms of $\wt G$ act trivially on $T$, we get $\phi = \id_T$.
This shows that the map~\lref{444} is injective, and hence $\Out(G)$ is infinite, as desired.

\medskip \noindent
``$\Leftarrow$'': Suppose $k \le 1$.
Write $p \from \wt G = G' \x T \to G$ as before.
Since $G'$ is compact and semisimple, its fundamental group $\pi_1(G')$ is finite~\cite[Thm.~4.69]{Knapp02}.
We may and will therefore assume that $G'$ is simply connected.

\begin{clm} \label{Out cover}
There is an injective map $\Out(G) \to \Out(\wt G)$.
\end{clm}

\begin{proof}
First we show that any $\phi \in \Aut(G)$ lifts (uniquely) to some $\wt\phi \in \Aut(\wt G)$.
For this, it is sufficient to show that the subgroup $\pi_1(\wt G) \subset \pi_1(G)$ is stabilized by any such $\phi$.
Note that $\pi_1(\wt G) = \pi_1(T) \isom \Z^k$, so we may assume that $k = 1$.
Let $T_G = p(T) \subset G$, then $T_G \isom S^1$ and it suffices to show that $T_G$ is stabilized by $\phi$.
Since $Z(G')$ is finite, $T = Z(\wt G)^\circ$ is the identity component of the center of $\wt G$.
This implies that $T_G = Z(G)^\circ$ because $p\inv(Z(G)) = Z(\wt G)$.
But $Z(G)^\circ$ is invariant under any automorphism of $G$.

Sending $\phi \mapsto \wt\phi$ defines a map $\Aut(G) \to \Aut(\wt G)$.
If $\phi = \operatorname{Int}_g$ is inner, then so is the lift $\wt\phi = \operatorname{Int}_{\wt g}$, where $\wt g \in p\inv(g)$.
Conversely, if $\wt\phi = \operatorname{Int}_{\wt g}$ is inner, then also $\phi = \operatorname{Int}_{p(g)}$ is inner.
Therefore the lifting map $\Aut(G) \to \Aut(\wt G)$ induces a map $\Out(G) \to \Out(\wt G)$, and the latter is injective.
\end{proof}

Recall that we want to show that $\Out(G)$ is finite.
By \cref{Out cover}, it is sufficient to show that $\Out(\wt G)$ is finite.

\begin{clm} \label{Aut cover}
We have $\Aut(\wt G) = \Aut(G') \x \Aut(T)$.
\end{clm}

\begin{proof}
Let $\phi \in \Aut(\wt G)$.
Then $\phi$ is of the form
\[ \phi(g, z) = \big( \alpha(g) \cdot \gamma(z), \, \delta(g) \cdot \beta(z) \big), \]
where $\alpha \from G' \to G'$, $\beta \from T \to T$, $\gamma \from T \to G'$ and $\delta \from G' \to T$.
We must show that $\gamma$ and $\delta$ are trivial.

Since $G'$ is semisimple, it is equal to its own commutator subgroup and so $\delta$ is trivial~\cite[Lemma~4.28]{Knapp02}.
Now consider $\gamma \from T \to G'$.
The image of $\gamma$ must lie inside $C_{G'}(\alpha(G'))$, the centralizer of the image of $\alpha$.
But $\phi(G' \x \set1) \subset G' \x \set1$ because $\delta$ is trivial.
The same argument can be applied to $\phi\inv$, so actually $\phi(G' \x \set1) = G' \x \set1$.
On the other hand, $\phi(G' \x \set1) = \img(\alpha) \x \set1$, so $\alpha$ is surjective.
So the image of $\gamma$ lies in $Z(G')$, which is finite.
But $T$ is connected, thus $\gamma$ is also trivial.
\end{proof}

Clearly, $\Inn(G' \x T) = \Inn(G') \x \Inn(T)$.
By \cref{Aut cover}, we get
\[ \Out(\wt G) = \frac{\Aut(G') \x \Aut(T)}{\Inn(G') \x \Inn(T)}
= \Out(G') \x \Out(T). \]
Since $G'$ is semisimple and simply connected, $\Out(G')$ is finite: outer automorphisms correspond to the (finitely many) automorphisms of the Dynkin diagram of~$G'$, cf.~\cite[Thm.~7.8]{Knapp02}.
Likewise, $\Out(T) = \Aut(T) = \GL k \Z$ is finite because $k \le 1$.
So $\Out(\wt G)$ is finite, too.
\end{proof}

\begin{prp} \label{N/H ses}
Let $G$ be a group, $H \subset G$ a subgroup, $N = N_G(H)$ the normalizer and $C = C_G(H)$ the centralizer of $H$ in $G$.
Then there is an exact sequence
\[ 1 \lto \factor C{Z(H)} \lto \factor NH \lto \Out(H). \]
\end{prp}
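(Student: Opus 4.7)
The plan is to construct the maps via conjugation and use the isomorphism theorems. Define the homomorphism $\phi \from N \to \Aut(H)$ by sending $n$ to the conjugation automorphism $\operatorname{Int}_n \big|_H$; this is well-defined precisely because $n \in N$ normalizes $H$. Composing with the quotient $\Aut(H) \to \Out(H)$ yields $\bar\phi \from N \to \Out(H)$. Since conjugation by any $h \in H$ defines an inner automorphism of $H$, we have $H \subset \ker \bar\phi$, so $\bar\phi$ descends to a homomorphism $\factor NH \to \Out(H)$. This will be the second map in the exact sequence.

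Next I would identify the kernel of $\factor NH \to \Out(H)$. By definition, $nH$ lies in the kernel iff $\operatorname{Int}_n\big|_H = \operatorname{Int}_h\big|_H$ for some $h \in H$, i.e.~iff $h^{-1} n$ centralizes $H$. Equivalently, $n \in HC$, so the kernel equals $\factor{HC}H$. By the second isomorphism theorem, $\factor{HC}H \isom \factor C{H \cap C} = \factor C{Z(H)}$, since $H \cap C_G(H) = Z(H)$ by definition.

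Finally, I would verify that the first map $\factor C{Z(H)} \to \factor NH$, induced by the inclusion $C \inj N$ (the centralizer is always contained in the normalizer), is indeed injective with image equal to the kernel computed above. Injectivity follows because an element $c \in C$ maps to the trivial coset in $\factor NH$ iff $c \in H \cap C = Z(H)$. Combining these observations yields exactness at $\factor C{Z(H)}$ and at $\factor NH$.

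There is essentially no obstacle here; the argument is a standard diagram chase once one writes down the conjugation homomorphism. The only point requiring a moment of care is the identification of the kernel $HC/H$ with $C/Z(H)$ and the verification that the resulting map agrees with the one induced by the inclusion $C \subset N$, but both are immediate from the second isomorphism theorem.
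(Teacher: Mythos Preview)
Your proposal is correct and follows essentially the same approach as the paper: define the conjugation map $N \to \Aut(H)$, pass to $\Out(H)$, observe that $H$ lies in the kernel so the map descends to $N/H$, and identify the kernel as $HC/H \isom C/Z(H)$ via the second isomorphism theorem. The only difference is cosmetic---you make the injectivity of the first map and its compatibility with the inclusion $C \subset N$ slightly more explicit than the paper does.
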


\begin{proof}
Each $n \in N$ defines an automorphism of $H$ by conjugation:
\[ \phi(n)(h) = n h n\inv. \]
Then $\phi \from N \to \Aut(H)$ is a group homomorphism and $\phi(H) \subset \Inn(H)$.
Therefore $\phi$ induces a map
\[ \psi \from \factor NH \lto \Out(H). \]
An element $n \in N$ maps to the identity in $\Out(H)$ if and only if $\phi(n) \in \Inn(H)$ if and only if there exists $h \in H$ such that
\[ n x n\inv = h x h\inv \quad \text{for all $x \in H$} \]
if and only if $h\inv n \in C$ if and only if $n \in hC$.
Thus $\phi\inv \big( \!\Inn(H) \big) = HC$ (note that this is a subgroup).
Therefore the kernel of $\psi$ is
\[ \ker(\psi) = \factor{HC}H \isom \factor C{C \cap H} = \factor C{Z(H)}. \]
This ends the proof.
\end{proof}

\section{The restricted holonomy group} \label{sec Hol0}

In this section we will deduce information about the restricted holonomy group $H^\circ$ from the existence of certain tensors on $X$, using the Bochner principle.
As in~\cite[Lemma~5.4]{GrafPatel25}, the proofs are modeled on arguments in~\cite{CataneseDiScala13, CataneseDiScala14}.
The only difference is that instead of considering the universal cover of $X$ (which we clearly cannot do), we argue purely locally.

\begin{lem} \label{main2 Hol0}
Assume that $X$ satisfies condition~\lref{main2.3}, i.e.~there is a semispecial tensor $\psi$ on $X$.
Then each $H_i^\circ$ is the holonomy of an irreducible bounded symmetric domain $\cD_i$ of tube type whose rank divides its dimension.
Moreover, each $\cD_i$ is determined uniquely by the tensor $\psi$.
\end{lem}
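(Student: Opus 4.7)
The plan is to promote $\psi$ to a parallel tensor via the Bochner principle, split it along the canonical decomposition of the restricted holonomy, and then identify each factor by representation theory. Since $\eta^{[2]} \isom \O X$, there is a finite \qe double cover $p \from X' \to X$ on which $p^*\eta$ is trivial. On $X'$ we still have klt singularities and $K_{X'}$ ample, and because $p$ is étale in codimension one, the restricted holonomy group and its canonical decomposition \lref{canonical decomposition} are preserved; we may therefore replace $X$ by $X'$ and assume $\eta \isom \O X$, so that $\psi$ becomes a genuine ``slope zero'' section of $\Sym^n(\Omegap X1)(-K_X)$. Applying the singular Bochner principle for the K\"{a}hler--Einstein metric of negative scalar curvature on a klt variety (in the spirit of \cite{BochnerCGGN}) then forces $\psi$ to be parallel with respect to the Chern connection of $h_\ke$. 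In particular, $\psi_x \in \Sym^n V\dual \tensor \det V$ is invariant under $H$ and hence under $H^\circ$.

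Next I would exploit the decomposition $V = V_1 \oplus \cdots \oplus V_m$ and $H^\circ = H_1^\circ \x \cdots \x H_m^\circ$ together with the fact that each $H_i^\circ$ contains the scalar circle $\U 1 \cdot \id_{V_i}$ in its center (as used in the proof of \cref{hol cover} via \cite[App.~5, Lemma 2]{KobayashiNomizu96}). A character computation with this circle shows that the only nonzero $H^\circ$-invariants in $\Sym^n V\dual \tensor \det V$ are of the form $\bigotimes_i \psi_{x,i}$ with $\psi_{x,i} \in \Sym^{n_i} V_i\dual \tensor \det V_i$ nonzero and $H_i^\circ$-invariant, where $n_i \defn \dim_\C V_i$. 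By the Berger-type alternative \cite[Cor.~10.92]{Besse87} (already invoked in the proof of \cref{hol cover}), each $H_i^\circ$ is either $\U{V_i}$ or the holonomy of an irreducible Hermitian symmetric space. The case $\U{V_i}$ with $n_i \ge 2$ is ruled out because $\Sym^{n_i} V_i\dual$ and $\det V_i\dual$ are non-isomorphic irreducible $\U{V_i}$-modules and hence admit no equivariant map. For $n_i = 1$ we land in $\bB^1$, which is tube type of rank one, so the conclusion of the lemma trivially holds.

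It then remains to show that an irreducible Hermitian symmetric space $\cD_i$ of dimension $n_i \ge 2$ admits a nonzero $H_i^\circ$-invariant in $\Sym^{n_i} V_i\dual \tensor \det V_i$ if and only if $\cD_i$ is of tube type with $\rk \cD_i$ dividing $n_i$. I plan to extract this directly from the analysis in \cite[Section~4]{CataneseDiScala13}: on a tube domain, the generic norm of the associated Jordan algebra raised to the power $n_i / \rk \cD_i$ furnishes the required invariant, while its non-existence on non-tube types and on tube types with $\rk \nmid \dim$ can be verified case-by-case against \cref{ihss classification} by matching the degrees of the fundamental $K_0$-semi-invariants. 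Uniqueness of $\cD_i$ is then immediate from \cref{dim and K_0 uniqueness}, since $\psi$ recovers both $\dim \cD_i = n_i$ from the canonical decomposition and the Lie algebra $\frk_{0,i}$ as that of $H_i^\circ$.

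The main obstacle I anticipate is twofold. First, citing the Bochner principle in precisely the form needed, that is, for slope-zero reflexive tensors on a klt variety with negative K\"{a}hler--Einstein metric (the specific weight combination of $\Sym^n \Omegap X1$ twisted by $-K_X$ is what makes the Weitzenb\"ock identity degenerate and yields parallelism). Second, the algebraic invariant-theoretic classification step, which is the heart of the argument in \cite{CataneseDiScala13} and which I would need to transcribe into a purely local statement on $V_i$ with its $H_i^\circ$-action, rather than appealing to the cocompact quotient of $\cD_i$ as in the original source.
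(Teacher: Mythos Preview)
Your strategy---Bochner principle $\Rightarrow$ $H^\circ$-invariance of $\psi_x$ $\Rightarrow$ factorisation along the de~Rham splitting $\Rightarrow$ invariant-theoretic identification of each factor---is correct and is the same skeleton as the paper's proof; the differences are tactical. The paper does \emph{not} pass to a double cover: since $\eta$ carries a flat unitary metric, $H^\circ$ acts trivially on the fibre $\eta_x$, so $\psi_x$ is already a genuine $H^\circ$-invariant in $\Sym^n V\dual \tensor \det V$ (equivalently, the zero scheme $\{\psi_x=0\}\subset\bP V$ is $H^\circ$-invariant regardless of the twist). It then quotes \cite[Prop.~A.1]{CataneseFranciosi09} to kill the variables on which some $H_i^\circ=\U{V_i}$ acts transitively and \cite[Cor.~2.2]{CataneseDiScala13} to kill the non-tube factors---this is a packaged form of your weight computation and your planned case check. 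For uniqueness the paper reads off $(\rk\cD_i,\dim\cD_i)$ from the Jordan-norm factorisation of $\psi_x$ and appeals to \cite[Thm.~2.3]{CataneseDiScala13}; your route via \cref{dim and K_0 uniqueness} also works, though it really shows the $\cD_i$ are determined already by $(X,\omega_\ke)$, hence \emph{a fortiori} by $\psi$. One caution on your reduction step: asserting that $H^\circ$ and its canonical decomposition are preserved under $p\from X'\to X$ requires comparing $\operatorname{Hol}^\circ(\reg X',\omega_{\ke,X'})$ with $\operatorname{Hol}^\circ\!\big(p^{-1}(\reg X),p^*\omega_\ke\big)$, since $\reg X'$ may strictly contain $p^{-1}(\reg X)$; this is true (the complement has real codimension $\ge 4$, so Ambrose--Singer yields the same Lie algebra), but it is precisely the kind of non-compact subtlety the paper is otherwise careful about, and the direct treatment of $\eta$ sidesteps it entirely.
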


\begin{proof}
Using notation from \cref{setup holonomy}, consider the decomposition
\[ V = U_1' \oplus U_1'' \oplus U_2 \]
where $U_1'$ is the sum of all the $V_i$ such that the corresponding $H_i^\circ$ is the holonomy of a bounded symmetric domain of tube type, $U_1''$ is the sum of the $V_i$ such that $H_i^\circ$ is the holonomy of a bounded symmetric domain which is neither a ball nor of tube type, and $U_2$ is the sum of the $V_i$ such that $H_i^\circ$ acts transitively.

Let $u_1, \dots, u_a; w_1, \dots, w_b; z_1, \dots, z_r$ be coordinates on $U_1'$, $U_1''$ and $U_2$, respectively.
Let $\vol_1', \vol_1'', \vol_2$ be the corresponding volume forms.
The tensor $\psi$ evaluated at $x$ can be written as
\[ \psi_x = f(u, w, z) \cdot (\vol_1')\inv \wedge (\vol_1'')\inv \wedge (\vol_2)\inv, \]
where $f$ is a homogeneous degree $n$ polynomial on $V$.

\begin{clm}
The polynomial $f$ only depends on $u$, i.e.~$f(u, w, z) = f(u)$.
\end{clm}

\begin{proof}
By the Bochner principle~\cite[Cor.~3.4]{GrafPatel25} and the holonomy principle, $\psi_x$ is $H^\circ$-invariant.
In particular, its zero scheme $\set{ \psi_x = 0 } \subset \bP V$ is also $H^\circ$-invariant.
But $\set{ \psi_x = 0 } = \set{ f = 0 }$, so we can apply~\cite[Prop.~A.1]{CataneseFranciosi09} to $f$.
We obtain that $f$ does not depend on $z$.
Then by~\cite[Cor.~2.2]{CataneseDiScala13}, $f$ also does not depend on~$w$.
\end{proof}

We can now argue as in~\cite[proof of Thm.~1.2, p.~428]{CataneseDiScala13} to obtain that $U_1'' = U_2 = 0$, that is, each $H_i^\circ$ is the holonomy of a bounded symmetric domain of tube type.
It remains to prove the second part of the statement.

Pick a bounded symmetric domain $0 \in \cD \subset \C^n$ such that the action of $H^\circ$ on $V = U_1'$ equals the action of $K^\circ$ on $T_0 \cD$, where $K \subset \Aut(\cD)$ is the stabilizer of $0 \in \cD$.
By the above, we may assume that $\cD = \cD_1 \x \cdots \x \cD_m$ is a product of bounded symmetric domains of tube type.
Again, as in~\cite[p.~428]{CataneseDiScala13} it follows that $\rk \cD_j$ divides $\dim \cD_j$ for each $j = 1, \dots, m$.

To prove the second claim, we may use the classification~\cite[Thm.~2.3]{CataneseDiScala13} as is, to conclude that the pairs $(\rk \cD_j, \dim \cD_j)$ uniquely determine each domain $\cD_j$.
\end{proof}

\begin{lem} \label{main3 Hol0}
Assume that $X$ satisfies condition~\lref{main3.3}, i.e.~there is a slope zero tensor $\psi$ on $X$.
Then each $H_i^\circ$ is the holonomy of an irreducible bounded symmetric domain $\cD_i$ of tube type.
\end{lem}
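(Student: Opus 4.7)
The plan is to follow the proof of \cref{main2 Hol0} step by step; the argument will in fact be somewhat shorter, as here we need neither any divisibility of ranks by dimensions nor any uniqueness of the symmetric domains $\cD_i$.

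First, I would carry over the decomposition $V = U_1' \oplus U_1'' \oplus U_2$ from the proof of \cref{main2 Hol0}, where $U_1'$ collects the summands $V_i$ corresponding to tube-type factors, $U_1''$ those corresponding to non-ball, non-tube-type factors, and $U_2$ those on which $H_i^\circ$ acts transitively. Choosing coordinates $u, w, z$ on these three summands and letting $\omega$ denote a volume form on $V$ compatible with the splitting, the slope zero tensor evaluated at the basepoint $x$ takes the form
\[ \psi_x = f(u, w, z) \cdot \omega^{-m}, \]
where $f$ is a homogeneous polynomial of degree $mn$ on $V$.

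Next, by the Bochner principle \cite[Cor.~3.4]{GrafPatel25} together with the holonomy principle, $\psi_x$ is $H^\circ$-invariant, and hence so is the zero locus $\set{f = 0} \subset \bP V$. Applying \cite[Prop.~A.1]{CataneseFranciosi09} to $f$ gives that $f$ does not depend on $z$, and then \cite[Cor.~2.2]{CataneseDiScala13} shows that $f$ also does not depend on $w$. The corresponding argument in the proof of \cite[Thm.~1.3]{CataneseDiScala13} --- the slope zero analog of the reference to p.~428 used in \cref{main2 Hol0} --- then yields $U_1'' = U_2 = 0$, meaning that each $V_i$ corresponds to a tube-type bounded symmetric domain, as required.

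The step I expect to require the most care is verifying that the cited polynomial-invariance results from \cite{CataneseFranciosi09} and \cite{CataneseDiScala13} apply to polynomials of degree $mn$ rather than only $n$ (the semispecial case considered in \cref{main2 Hol0}). Since those arguments concern only the structure of invariant hypersurfaces under the relevant holonomy representations and do not appear to use the specific degree, the extension should be routine. Everything else is a direct transcription of the proof of \cref{main2 Hol0}, with the last two sentences there (about rank dividing dimension and uniqueness of the $\cD_j$) simply omitted.
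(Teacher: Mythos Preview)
Your proposal is correct and matches the paper's own approach: the authors likewise say the argument is a direct adaptation of the proof of \cref{main2 Hol0}, referring to \cite[proof of Thm.~1.3, p.~436]{CataneseDiScala13} for the slope zero analog and omitting the details. Your remark about the degree being $mn$ rather than $n$ is the only point requiring attention, and as you note the cited invariance results are degree-independent.
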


\begin{proof}
In the smooth case, the argument is very similar to the case of semispecial tensors, cf.~\cite[proof of Thm.~1.3, p.~436]{CataneseDiScala13}.
The changes that need to be made in our singular setting, on the other hand, are similar to those in the proof of \cref{main2 Hol0}.
Therefore, we will omit the details.
\end{proof}

\begin{lem} \label{main4 Hol0}
Assume that $X$ satisfies condition~\lref{main4.3}.
Then each $H_i^\circ$ is the holonomy of an irreducible bounded symmetric domain $\cD_i$ not isomorphic to $\bB^m$ for any $m \ge 2$.
Moreover, each $\cD_i$ is determined uniquely by the tensor $\sigma$.
\end{lem}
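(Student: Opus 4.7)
The plan is to parallel the strategy of \cref{main2 Hol0,main3 Hol0}: transfer the pointwise structure of $\sigma$ to an $H^\circ$-invariant object via the Bochner principle, apply the Berger--Simons dichotomy to identify each holonomy factor, and then exploit \lref{p1}--\lref{p3} to rule out higher-dimensional ball factors and to pin down each $\cD_i$.

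First I would invoke the Bochner principle \cite[Cor.~3.4]{GrafPatel25} to conclude that $\sigma$ is parallel for $\omega_\ke$ on $\reg X$. By the holonomy principle, $\sigma_x$ and hence its first Mok characteristic cone $\mathcal{CS} \subset V$ are $H^\circ$-invariant. In the canonical decomposition $V = V_1 \oplus \cdots \oplus V_m$ of \cref{canonical decomposition}, each $H_i^\circ$ acts irreducibly on $V_i$; by the Berger--Simons dichotomy \cite[Cor.~10.92]{Besse87}, $H_i^\circ$ is either the full unitary group $\U{V_i}$ (in which case $\cD_i = \bB^{\dim V_i}$) or the isotropy representation of an irreducible Hermitian symmetric space $\cD_i$ that is not isomorphic to any ball.

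The main obstacle will be to rule out the ball case in dimensions $\ge 2$, for which I would adapt the argument from \cite[proof of Thm.~1.2]{CataneseDiScala14}. Because $\mathcal{CS}$ is $H^\circ$-invariant and splits by \lref{p2} into the irreducible components $\mathcal{CS}'(j) = T_1' \x \cdots \x \mathcal{CS}_j' \x \cdots \x T_m'$, the product structure must be compatible with the $H^\circ$-action, which forces the two direct-sum decompositions $V = V_1 \oplus \cdots \oplus V_m$ and $T = T_1' \oplus \cdots \oplus T_m'$ to coincide up to reindexing (in particular, the two integers $m$ are equal). If now $H_i^\circ = \U{V_i}$ with $\dim V_i \ge 2$, then $\U{V_i}$ acts transitively on $V_i \setminus \set 0$ modulo scaling, forcing $\mathcal{CS}_i' \subset V_i$ to be either $\set 0$ or all of $V_i$. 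The first case contradicts the non-degeneracy clause of \lref{p3} in dimension $\ge 2$, while the second, combined with \lref{p2}, gives $\mathcal{CS} = T$ and contradicts \lref{p1}.

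For the uniqueness of each $\cD_i$, I would invoke the classification of first Mok characteristic varieties of irreducible Hermitian symmetric spaces (cf.~\cite[Thm.~2.3]{CataneseDiScala13}, already used in \cref{main2 Hol0}): the smooth projective variety $\cS_i'$ intrinsic to $\sigma_x$, together with the degenerate case $\mathcal{CS}_i' = \set 0$ and $\dim T_i' = 1$ that corresponds unambiguously to $\cD_i = \bB^1$, pins down each factor $\cD_i$ among the irreducible bounded symmetric domains not isomorphic to a higher-dimensional ball.
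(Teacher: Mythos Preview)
Your proposal is correct and follows essentially the same route as the paper: Bochner principle to make $\sigma$ parallel, holonomy invariance of the Mok cone, the Berger--Simons dichotomy, and then the combinatorics of \lref{p1}--\lref{p3} to exclude higher-dimensional balls, with uniqueness read off from the first Mok characteristic variety; the paper simply defers these last two steps to \cite[proof of Thm.~1.2]{CataneseDiScala14} rather than spelling them out. One correction: for uniqueness the relevant input is the claim inside the proof of \cite[Thm.~1.2]{CataneseDiScala14} that the pair $(\dim \cD_i, \dim \cS^1(\cD_i))$ determines $\cD_i$, not \cite[Thm.~2.3]{CataneseDiScala13}, which is the tube-type classification used in \cref{main2 Hol0} and does not speak about Mok characteristic varieties.
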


\begin{proof}
Since $K_X$ is assumed to be ample, the tensor $\sigma$ is parallel with respect to the \kahler--Einstein metric $\omega_\ke$ on $\reg X$ by the Bochner principle~\cite[Cor.~3.4]{GrafPatel25}.
Thus the decomposition of the first Mok characteristic cone $\mathcal{CS}$ of $\sigma$ into irreducible components is invariant under the action of $H^\circ$.

Since $K_X$ is ample, each $H_i^\circ$ is either equal to $\U{V_i}$, or $H_i^\circ$ acts on $V_i$ as the holonomy of an irreducible bounded symmetric domain $\cD_i$ of rank $\ge 2$.
Since we work locally around the smooth point $x \in \reg X$, the arguments in the proof of~\cite[Thm.~1.2]{CataneseDiScala14} work verbatim.
In particular, they show that the cones $\mathcal{CS}_i$ are just the origin when $\dim V_i = 1$, and they are the cones over the first Mok characteristic variety otherwise.
This rules out the case $H_i^\circ = \U{V_i}$, for all $i$.
In particular, no $\cD_i$ can be a ball $\bB^m$ with $m \ge 2$.

The tensor $\sigma$ determines the first Mok characteristic varieties $\cS_i^1$ (see~\cite[Def.~2.1]{CataneseDiScala14}), and since $H_i^\circ = \operatorname{Hol}(\cD_i, \omega_{\mathrm{Berg}})$ for all $i$, where the $\cD_i$ are irreducible bounded symmetric domains, we have $\cS_i^1 = \cS^1(\cD_i)$ for all $i$.
From the claim in the proof of~\cite[Thm.~1.2]{CataneseDiScala14}, each irreducible bounded symmetric domain $\cD_i$ is determined by the data $(\dim \cD_i, \dim \cS^1(\cD_i))$.
\end{proof}

\section{Local isometries}

Assume that we are in the situation of \cref{main2}, \cref{main3} or \cref{main4}, and that we want to prove the implication from the second item (existence of a tensor) to the first one (uniformization).
Applying the corresponding lemma from \cref{sec Hol0}, we get irreducible bounded symmetric domains $\cD_i$ such that each $H_i^\circ$ is the holonomy of $\cD_i$.
Set $\cD \defn \cD_1 \x \cdots \x \cD_m$ and let $\omega_{\mathrm{Berg}}$ be the Bergman metric on $\cD$.

\begin{prp} \label{local iso}
The spaces $(\reg X, \omega_\ke)$ and $(\cD, \omega_{\mathrm{Berg}})$ are locally isometric.
\end{prp}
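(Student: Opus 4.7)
The plan is to show that $(\reg X, \omega_\ke)$ is locally Hermitian symmetric by means of Simons' holonomy theorem, and then to identify the local model with $\cD$ by matching the holonomy representation and complex dimension of each factor.

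First I would use parallel transport to extend the $H^\circ$-invariant splitting $V = V_1 \oplus \cdots \oplus V_m$ from~\lref{canonical decomposition} to a parallel decomposition of $\T{U}$ on a sufficiently small neighborhood $U \subset \reg X$ of the base point $x$. By the local de~Rham decomposition theorem, this integrates to a \kahler product structure $U \isom U_1 \x \cdots \x U_m$, in which each factor $(U_i, \omega_\ke|_{U_i})$ is a \kahler--Einstein manifold whose restricted holonomy representation is $H_i^\circ$ acting irreducibly on $V_i$.

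Next I would argue, factor by factor, that each $(U_i, \omega_\ke|_{U_i})$ is locally isometric to $\cD_i$ equipped with a suitably normalized Bergman metric. If $\dim_\C V_i = 1$, then $\cD_i \isom \bB^1$ and $U_i$ is a \kahler--Einstein Riemann surface, which automatically has constant negative Gauss curvature. Otherwise, the lemmas of \cref{sec Hol0} (applied according to whichever of \cref{main2,main3,main4} is being proved) identify $\cD_i$ as an irreducible Hermitian symmetric space of rank $\ge 2$, so that $H_i^\circ$ does not act transitively on the unit sphere in $V_i$. Simons' holonomy theorem (in its local form: under this hypothesis the curvature tensor of $(U_i, \omega_\ke|_{U_i})$ is parallel) then forces $U_i$ to be locally symmetric. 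Since $\omega_\ke$ has negative Einstein constant, the local model is an irreducible Hermitian symmetric space of non-compact type with the same holonomy representation and complex dimension as $\cD_i$, and hence equal to $\cD_i$ by \cref{dim and K_0 uniqueness}.

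The main obstacle I foresee is matching the actual metrics rather than just the biholomorphism types. Any two invariant \kahler metrics on the irreducible homogeneous space $\cD_i$ agree up to a positive scalar, by Schur's lemma applied to the isotropy representation at a base point. Both $\omega_{\mathrm{Berg}}|_{\cD_i}$ and the universal-cover lift of $\omega_\ke|_{U_i}$ are $\Aut(\cD_i)$-invariant, so they are proportional on each factor. Normalizing each Bergman metric so that its Einstein constant matches the one inherited from $\omega_\ke$, the factorwise local isometries combine to the desired local isometry $(\reg X, \omega_\ke) \to (\cD, \omega_{\mathrm{Berg}})$.
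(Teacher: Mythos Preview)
Your proposal is correct and follows essentially the same route as the paper: local de~Rham splitting, treat the one-dimensional factors by hand, and for the remaining factors invoke Berger--Simons (what the paper cites as~\cite[Thm.~10.90]{Besse87}) to force local symmetry, then appeal to \cref{dim and K_0 uniqueness}. Your final paragraph on matching the actual metric normalizations via Schur's lemma is extra care that the paper leaves implicit; it does no harm and arguably makes the argument cleaner.
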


\begin{proof}
Pick an arbitrary point $x \in \reg X$ and let $x \in U \subset X$ be a sufficiently small open neighborhood such that the decomposition~\lref{canonical decomposition} of $T_x X$ is induced by a decomposition $U = U_1 \x \cdots \x U_m$, cf.~\cite[Thm.~10.38]{Besse87}.
We know that for each $i = 1, \dots, m$, the manifolds $U_i$ and $\cD_i$ have the same holonomy group, namely $H_i^\circ$, and we want to show that they are locally isometric.
After renumbering, we may assume that there is an $m_0$ such that $\dim U_i = 1$ if and only if $i \le m_0$.

If $i \le m_0$, then $U_i$ is locally isometric to the unit disc with the Poincar\'e metric, which is nothing but $\cD_i$.
Otherwise, $\cD_i$ is a symmetric space of rank $\ge 2$: indeed, by \cref{rank one spaces}, if $\rk \cD_i = 1$, then $\cD_i$ would be a ball $\bB^n$ with $n \ge 2$, which is excluded in each case by the corresponding lemma in \cref{sec Hol0}.

By~\cite[Thm.~10.90]{Besse87}, the holonomy of $\cD_i$ is \emph{not} transitive on the unit sphere of its tangent space at the base point.
Therefore also the holonomy of $U_i$ is not transitive on the unit sphere.
By said theorem again, $U_i$ is an irreducible locally symmetric space of rank $\ge 2$.
\cref{dim and K_0 uniqueness} then implies that $U_i$ and $\cD_i$ are locally isometric.
\end{proof}

\section{Proof of \cref{main5}}

In this section we prove \cref{main5}.
In \cref{tensor to uvhs}, we show how to obtain a uVHS from a tensor whose first Mok characteristic cone satisfies the properties in the theorem.
In \cref{uvhs to tensor}, we deal with the converse direction.

\begin{rem} \label{H = K}
By~\cite[Prop.~10.79]{Besse87}, each irreducible bounded symmetric domain $\cD_i$ in \cref{main4 Hol0} can be expressed as $\cD_i = \factor{G_i^0}{K_i^0}$, where $K_i^0 = H_i^\circ = \operatorname{Hol}(\cD_i, \omega_{\mathrm{Berg}})$ and $H_i^\circ$ acts on the tangent space $T_e \cD_i$ via the adjoint representation.
It follows that the irreducible holonomy factors $H_i^\circ$, and hence the restricted holonomy group $H^\circ$, are also determined by $\sigma$.
\end{rem}

\begin{lem} \label{uvhs cover}
Let $\gamma \from Y \to X$ be a finite \qe cover, where $X$ and $Y$ are normal projective varieties with klt singularities.
If $\reg Y$ admits a uniformizing VHS for some Hodge group $G_0$ of Hermitian type then so does $\reg X$.
\end{lem}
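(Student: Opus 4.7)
My plan is to reduce to the Galois case by passing to the Galois closure, and then to apply \cref{prp uVHS} in both directions. First, I would let $\wt\gamma \from \wt Y \to X$ denote the Galois closure of $\gamma$, which factors as $\wt Y \xrightarrow{\delta} Y \xrightarrow{\gamma} X$ with both maps finite and \qe. I would pull back the uVHS from $\reg Y$ to $\reg{\wt Y}$: over the \'etale locus of $\delta$ (a big open subset of both $\wt Y$ and $Y$), the tautological pullback of $(P, \theta, h)$ is a uVHS, and the relevant reflexive sheaves together with the reduction $P_H$ extend uniquely across the codim $\ge 2$ complement, while flatness of $D_h$ is preserved. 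This step reduces us to the case that $\gamma$ itself is Galois with group $G$.

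Then, assuming $\gamma$ Galois, the direction \lref{uVHS.2} $\imp$ \lref{uVHS.1} of \cref{prp uVHS} applied to $Y$ yields a bounded symmetric domain $\cD = \factor{G_0}{K_0}$ and a discrete cocompact subgroup $\Gamma_Y \subset \Aut(\cD)$ acting freely in codimension one with $Y \isom \factor \cD {\Gamma_Y}$. Each $g \in G = \Gal(Y/X)$ is a biholomorphism of $Y$ which, via covering-space theory applied to the covering $\cD^\circ \to \reg Y$, lifts to an automorphism of $\cD$ normalizing $\Gamma_Y$. I would then let $\Gamma_X \subset \Aut(\cD)$ be the subgroup generated by $\Gamma_Y$ and these lifts; the resulting short exact sequence $1 \to \Gamma_Y \to \Gamma_X \to G \to 1$ ensures that $\Gamma_X$ is discrete and cocompact, and gives $X \isom \factor \cD {\Gamma_X}$.

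The remaining piece is to verify that $\Gamma_X$ acts freely in codimension one on $\cD$. A non-identity $\eta \in \Gamma_X$ with a divisorial fixed locus in $\cD$ would either already lie in $\Gamma_Y$, contradicting the codim-one freeness of the $\Gamma_Y$-action, or descend to a non-trivial element of $G$ with a divisorial fixed locus on $Y$, contradicting the assumption that $\gamma$ is \qe. Once this is checked, I would apply the reverse direction \lref{uVHS.1} $\imp$ \lref{uVHS.2} of \cref{prp uVHS} to $X \isom \factor \cD {\Gamma_X}$ to conclude that $\reg X$ admits a uVHS for $G_0$.

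The main obstacle I anticipate is the pullback step along the \qe cover $\delta$: one must justify that $P$, the reduction $P_H$, and the flat connection $D_h$ extend from the \'etale locus of $\delta$ across the codim $\ge 2$ branch and singular locus to all of $\reg{\wt Y}$. This is not purely formal, but should follow from reflexive extension combined with the extension of flat bundles across \qe covers from \cite[Thm.~1.14]{GKP16}, essentially along the lines of the proof of \cref{prp uVHS}.
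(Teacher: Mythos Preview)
Your proposal is correct in outline but takes a more circuitous route than the paper, and the detour creates precisely the obstacle you flag at the end.

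The paper's proof avoids the Galois closure entirely. It applies \cref{prp uVHS} to $Y$ immediately, obtaining $Y \isom \factor{\cD}{\Gamma_Y}$. Then it observes that $Y^\circ \defn \gamma\inv(\reg X) \to \reg X$ is \'etale (no Galois hypothesis needed), so $\reg X$ and $Y^\circ$ share the same universal cover $\cD^\circ$, a big open subset of $\cD$, and $\reg X \isom \factor{\cD^\circ}{\Gamma}$ with $\Gamma = \pi_1(\reg X) \subset \Aut(\cD)$. Finally, the canonical uVHS on $\cD$ from \cref{bsd uvhs} restricts to $\cD^\circ$ and descends to $\reg X$ by $\Gamma$-equivariance, exactly as in the proof of \cref{prp uVHS}. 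This is your Galois-case argument stripped of the group-theoretic scaffolding: instead of building $\Gamma_X$ as an extension of $\Gamma_Y$ by $G$ and checking codimension-one freeness by hand, the paper simply identifies $\Gamma$ with $\pi_1(\reg X)$ directly.

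The practical payoff is that the paper never needs to pull back the uVHS along a \qe map, so your ``main obstacle'' evaporates. Your concern there is legitimate: reflexive extension handles the holomorphic bundle $P$, but the metric $P_H$ is only a $\sC^\infty$ reduction of structure group and $D_h$ is a connection on a smooth principal bundle, so extending these across codimension two is not a matter of sheaf theory alone. It can be done (ultimately by the same uniformization trick), but the paper's ordering of the steps makes the issue disappear. Both arguments implicitly use that $K_Y$ is ample, which follows from the ambient hypothesis $K_X$ ample and $\gamma$ \qe, though the lemma as stated does not record this.
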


\begin{proof}
Suppose $\reg Y$ admits a uniformizing VHS corresponding to $G_0$.
Then we know from \cref{prp uVHS} that $Y$ is uniformized by $\cD = \factor{G_0}{K_0}$.
Set $Y^\circ \defn \gamma\inv(\reg X)$.
Since $Y^\circ \to \reg X$ is \'etale, $\reg X$ and $Y^\circ$ have the same universal cover~$\cD^\circ$, which is a big open subset of $\cD$.
Moreover, $\reg X \isom \factor{\cD^\circ}{\Gamma}$, where $\Gamma \subset \Aut \cD$ is a discrete cocompact subgroup isomorphic to $\pi_1(\reg X)$.

According to \cref{bsd uvhs}, there is a uniformizing VHS $(P_\cD, \theta_\cD)$ on $\cD$ corresponding to $G_0$, which restricts to a uniformizing VHS $(P_\cD, \theta_\cD)\big|_{\cD^\circ}$ on $\cD^\circ$.
Following the same arguments as in the proof of \cref{prp uVHS}, we see that $(P_\cD, \theta_\cD)\big|_{\cD^\circ}$ is $\Gamma$-equivariant and descends to a uniformizing VHS on $\reg X$.
\end{proof}

We can now prove the implication ``$\lref{main5.2} \imp \lref{main5.1}$''.

\begin{prp} \label{tensor to uvhs}
If $X$ admits a holomorphic tensor $\sigma$ as in \cref{main5}, then there is a uniformizing VHS $(P, \theta)$ on $\reg X$ for a Hodge group $G_0$ of Hermitian type such that $\frg_0$ has no factors isomorphic to $\mathfrak{su}(p, 1)$ with $p \ge 2$.
\end{prp}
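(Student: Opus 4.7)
Following the four-step strategy outlined in the introduction, I would first apply \cref{hol cover intro} to pass to a finite \qe Galois cover $\gamma \from Y \to X$ for which $\operatorname{Hol}(\reg Y, \omega_{\ke, Y})$ is connected. The reflexive pullback of $\sigma$ to $Y$ still satisfies conditions~\lref{p1}--\lref{p3} at smooth preimages of the distinguished point, so by \cref{uvhs cover} I may replace $X$ by $Y$ and assume from now on that $H = H^\circ$.

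Next, \cref{main4 Hol0} tells us that each irreducible holonomy factor $H_i$ is the holonomy of an irreducible bounded symmetric domain $\cD_i$ not isomorphic to $\bB^m$ for any $m \ge 2$. I would set $\cD \defn \cD_1 \x \cdots \x \cD_m = \factor{G_0}{K_0}$ for a Hodge group $G_0$ of Hermitian type. By \cref{H = K} there is an identification $H \isom K_0$ under which the holonomy representation on $V = T_x X$ corresponds to the adjoint action of $K_0$ on $\frg^{-1,1}$; the absence of higher-dimensional ball factors in $\cD$ then translates into the absence of $\mathfrak{su}(p, 1)$ summands ($p \ge 2$) in $\frg_0$.

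For the main construction, the holonomy principle yields a $\sC^\infty$-reduction of the unitary frame bundle of $(\T{\reg X}, h_\ke)$ to $K_0$, giving a principal $K_0$-bundle $P_H \to \reg X$ together with an identification $\T{\reg X} \isom P_H \x_{K_0} \frg^{-1,1}$. Setting $P \defn P_H \x_{K_0} K$ produces a candidate uniformizing system of Hodge bundles $(P, \theta)$ with $P_H$ serving as its metric in the sense of \cref{def metric}. The required bracket condition $[\theta(u), \theta(v)] = 0$ follows immediately from $[\frg^{-1,1}, \frg^{-1,1}] \subset \frg^{-2,2} = 0$ in the Hodge decomposition of $\frg$.

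The main obstacle is to equip $P$ with a holomorphic structure for which $\theta$ becomes holomorphic, and to verify that the connection $D_h$ of \cref{flat metric} is flat -- only then does $(P, \theta, P_H)$ qualify as a uVHS. I would deduce both facts from \cref{local iso}: on each sufficiently small open $U \subset \reg X$, a local isometry with $\cD$ lets one pull back the uniformizing VHS $(P_\cD, \theta_\cD, P_{\cD, H})$ of \cref{bsd uvhs}. Transition maps between such local trivializations take values in $G_0$, which preserves both the complex structure on $P_\cD$ and the flat Maurer--Cartan connection, so the local data glue to give $P$ a global holomorphic structure and a globally flat connection $D_h$.
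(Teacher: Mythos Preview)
Your proposal is correct and follows essentially the same four-step strategy as the paper: reduce to connected holonomy via \cref{hol cover intro} and \cref{uvhs cover}, identify the $\cD_i$ via \cref{main4 Hol0} and \cref{H = K}, build $(P,\theta)$ from the holonomy bundle $P_H$, and use \cref{local iso} to verify the uVHS property.

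The one difference worth noting is in the final step. You frame it as a gluing problem, arguing that transition maps between different local isometries with $\cD$ lie in $G_0$ and hence preserve the holomorphic structure and the flat connection. The paper instead keeps $P_H$, $P$ and $D_h$ as globally defined objects from the start (via the holonomy subbundle and \cref{flat metric}) and uses the local isometry $f\from U \to U'$ only to produce an isomorphism $(R,D_h)\big|_U \isom f^*\big((R_\cD,D_\cD)\big|_{U'}\big)$; since $D_\cD$ is flat and flatness is a local property, $D_h$ is flat with no gluing required. This sidesteps the (true, but unstated in your sketch) fact that local isometries of a simply connected symmetric space extend to global ones. On the other hand, you are more explicit than the paper about the need to endow $P$ with a holomorphic structure making $\theta$ holomorphic; the paper leaves this implicit in the local identification with $P_\cD$.
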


\begin{proof}
Using notation from \cref{setup holonomy}, we know by \cref{hol cover} that $H$ is compact and that there is a \qe Galois cover $\gamma \from Y \to X$ such that $\operatorname{Hol}(\reg Y, g_{\ke, Y})$ is connected.
By \cref{uvhs cover}, $\reg X$ admits a uniformizing VHS if $\reg Y$ does.
Therefore, we may assume without loss of generality that $H = H^\circ$ is connected.

\begin{clm} \label{ushb Xreg}
There is a uniformizing system of Hodge bundles $(P, \theta)$ on $\reg X$, together with a metric $P_H \subset P$.
\end{clm}

\begin{proof}[Proof of \cref{ushb Xreg}]
Recall that any hermitian vector bundle admits a reduction of structure group to its holonomy group.
In particular, the tangent bundle of $\reg X$ can be written as $\T{\reg X} \isom P_H \times_H V$, where $P_H$ is the holonomy subbundle of the Chern connection on $\T{\reg X}$, the vector space $V$ is the tangent space $T_x X$ as in \cref{setup holonomy}, and $H$ acts on $V$ via the holonomy representation.

From \cref{main4 Hol0} we have a decomposition $H = H^\circ = H_1^\circ \x \cdots \x H_m^\circ$, where $H_i^\circ = \operatorname{Hol}(\cD_i, \omega_{\mathrm{Berg}})$ for $\cD_i$ an irreducible bounded symmetric domain not isomorphic to a higher-dimensional ball for each $i = 1, \dots, m$.
Recall that $V$ decomposes as $V = V_1 \oplus \cdots \oplus V_m$, where $H_i^\circ$ acts irreducibly on $V_i$ and trivially on $V_j$ for $j \ne i$.

By \cref{H = K}, we have $\cD_i = \factor{G_i^0}{K_i^0}$, where $G_i^0$ is a Hodge group of Hermitian type.
Let $G_i$ and $K_i$ denote the complexifications of $G_i^0$ and $K_i^0$, respectively.
Then $\frg_i \defn \operatorname{Lie}(G_i)$ admits a Hodge decomposition given by $\frg_i = \frg_i^{-1,1} \oplus \frg_i^{0,0} \oplus \frg_i^{1,-1}$.
By \cref{bsd uvhs}, the tangent bundle of $\cD_i$ can be written as
\[ \T{\cD_i} \isom P_i \x_{K_i^0} \frg_i^{-1,1}, \]
where $P_i$ is the principal $K_i^0$-bundle $G_i^0 \to \cD_i$ and $K_i^0$ acts on $\frg_i^{-1,1}$ via the adjoint representation.
By~\cite[Prop.~10.79]{Besse87}, the given action of $H_i^\circ$ on $V_i$ is isomorphic to the adjoint action of $K_i^0$ on $\frg_i^{-1,1}$ under the above identifications.
Since this action is via \C-linear maps, the adjoint representation of $K_i^0$ on $V_i$ factors through the complexification $K_i$.
It then follows that $V_i$ and $\frg_i^{-1,1}$ are isomorphic as $K_i$-representations.

Let $\cD \defn \cD_1 \x \cdots \x \cD_m = \factor{G_0}{K_0}$, where $G_0 \defn G_1^0 \x \cdots \x G_m^0$ and $K_0 \defn K_1^0 \x \cdots \x K_m^0$.
Note that $\frg_0 = \operatorname{Lie}(G_0)$ has no factors isomorphic to $\mathfrak{su}(p, 1)$ with $p \ge 2$ because no factor $\cD_i$ is a higher-dimensional ball.
Set $\frg \defn \operatorname{Lie}(G)$ with $G$ the complexification of $G_0$ and $K$ the complexification of $K_0$.
Consider the principal $K$-bundle $P \defn P_H \x_{K_0} K$ on $\reg X$, where we have identified $H$ and $K_0$.
By the above discussion, we have a chain of isomorphisms
\[ \T{\reg X} \isom P_H \x_{K_0} V \isom P \x_K V \isom \bigoplus_{i=1}^m P \x_{K_i} V_i \isom \bigoplus_{i=1}^m P \x_{K_i} \frg_i^{-1,1} \isom P \x_K \frg^{-1,1}. \]
This yields an isomorphism
\[ \theta \from \T{\reg X} \lto P \x_K \frg^{-1,1} \]
of vector bundles, which automatically satisfies $[ \theta(u), \theta(v) ] = 0$ for all local sections $u, v$ of $\T{\reg X}$.
Therefore, $(P, \theta)$ is a uniformizing system of Hodge bundles on $\reg X$, and $P_H \subset P$ is a metric.
\end{proof}

\begin{clm} \label{uvhs Xreg}
The uniformizing system of Hodge bundles $(P, \theta)$ together with the metric $P_H$ given in \cref{ushb Xreg} is in fact a uniformizing VHS.
\end{clm}

\begin{proof}[Proof of \cref{uvhs Xreg}]
On the bounded symmetric domain $\cD$ from above, there is by \cref{bsd uvhs} a natural uniformizing system of Hodge bundles $(P_\cD, \theta_\cD)$ given by $P_\cD \defn G_0 \x_{K_0} K$ and $\theta_\cD \from \T \cD \bij P_\cD \x_K \frg^{-1,1}$.
The principal $K$-bundle $P_\cD$ admits a metric $P_{\cD, H}$ given by $G_0 \subset P_\cD$.
This makes $(P_\cD, \theta_\cD)$ together with $P_{\cD, H}$ into a uniformizing VHS.

It follows from \cref{local iso} that $\reg X$ and the domain $\cD$ are locally isometric.
More precisely, any point $p \in \reg X$ has an analytic open neighbourhood $U$ such that there is an isometry $f \from U \bij U'$, where $U'$ is an analytic open subset of $\cD$.
Let $F_U$ be the frame bundle of $\T U$ and $F_{U'}$ the frame bundle of $\T{U'}$.
Then $f$ induces an isomorphism $\d f \from F_U \bij f^* F_{U'}$.
Since $f$ is an isometry, $\d f$ preserves the holonomy subbundles of $U$ and $U'$, that is, $\d f \big( P_H\big|_U \big) = f^* \big( P_{\cD, H}\big|_{U'} \big)$ as subbundles of $f^* F_{U'}$.
Extending the structure group from $K_0$ to $K$, we see that $P\big|_U$ and $f^* \big( P_\cD\big|_{U'} \big)$ are also isomorphic.
We then get a commutative diagram
\[ \begin{tikzcd}
\T U \arrow[rr, "\theta"] \arrow[d, "\wr", swap] & & P\big|_U \x_K \frg^{-1,1} \arrow[d, "\wr"] \\
f^* \T{U'} \arrow[rr, "f^* \theta_\cD"] & & f^* \big( P_\cD\big|_{U'} \big) \x_K \frg^{-1,1}.
\end{tikzcd} \]
This means that the uniformizing systems of Hodge bundles $(P, \theta)$ and $(P_\cD, \theta_\cD)$ are locally isomorphic via $\d f$.
Furthermore, as noted above the given metrics $P_H$ and $P_{\cD, H}$ are compatible with this isomorphism.

Let $R \defn P_H \x_{K_0} G_0$ be equipped with the connection $D_h$ from \cref{flat metric}, and analogously define $R_\cD \defn P_{\cD, H} \x_{K_0} G_0$ with the connection $D_\cD$.
By construction, $(R, D_h)$ is determined by the data given by $(P, \theta)$ and $P_H$, and analogously $(R_\cD, D_\cD)$ is determined by $(P_\cD, \theta_\cD)$ and $P_{\cD, H}$.
It therefore follows from the above observation that there is an isomorphism $(R, D_h)\big|_U \isom f^* \big( (R_\cD, D_\cD)\big|_{U'} \big)$.

Recall that in order to prove the claim, we need to show that $D_h$ is a flat connection.
Since we already know that $D_\cD$ is flat, it follows from the above isomorphism that $D_h\big|_U$ is flat.
But flatness is a local property, hence it follows by varying $p \in \reg X$ that $D_h$ is in fact flat.
Thus $(P, \theta)$ is a uniformizing VHS on~$\reg X$.
\end{proof}
This ends the proof of \cref{tensor to uvhs}.
\end{proof}

Now we prove the converse implication ``$\lref{main5.1} \imp \lref{main5.2}$''.
This will complete the proof of \cref{main5}.

\begin{prp} \label{uvhs to tensor}
Let $X$ be as in \cref{main5}, and suppose that $X$ admits a uniformizing VHS $(P, \theta)$ corresponding to a Hodge group $G_0$ of Hermitian type such that $\frg_0$ has no factors isomorphic to $\mathfrak{su}(p, 1)$ with $p \ge 2$.
Then there is a holomorphic tensor $\sigma \in \HH0.X.\sE nd \big( \T X [\tensor] \Omegap X1 \big).$ satisfying properties~\lref{p1}--\lref{p3}.
\end{prp}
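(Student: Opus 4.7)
The plan is to construct $\sigma$ on $\reg X$ from the Higgs field of the uVHS and verify its Mok-characteristic structure using the Lie-algebraic decomposition of $\frg$.

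First, I would apply \cref{prp uVHS}: the uVHS $(P, \theta)$ gives $X \isom \factor \cD \Gamma$ with $\cD = \factor{G_0}{K_0}$, and the decomposition of $\frg_0$ into simple ideals induces a product decomposition $\cD = \cD_1 \x \cdots \x \cD_m$ into irreducible factors. By the hypothesis on $\frg_0$, no $\cD_i$ equals $\bB^p$ for $p \ge 2$.

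Next, I would construct $\sigma$ using only the Lie bracket on $\frg$. Under the identifications $\T{\reg X} \isom P \x_K \frg^{-1,1}$ and $\Omegap{\reg X}1 \isom P \x_K \frg^{1,-1}$ provided by the uVHS, the Lie bracket together with the adjoint action defines a $K$-equivariant composition
\[ \frg^{-1,1} \tensor \frg^{1,-1} \xrightarrow{\;[\cdot,\cdot]\;} \frg^{0,0} \xrightarrow{\;\operatorname{ad}\;} \End{\frg^{-1,1}}. \isom \frg^{-1,1} \tensor \frg^{1,-1}, \]
where the last isomorphism uses the Killing pairing $\frg^{-1,1} \x \frg^{1,-1} \to \C$. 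Since the composition is $K$-equivariant, it globalizes to a holomorphic endomorphism of $\T{\reg X} \tensor \Omegap{\reg X}1$, which by the reflexive-extension identity in the statement yields a section $\sigma \in \HH0.X.\sE nd(\T X [\tensor] \Omegap X1).$. Geometrically, $\sigma$ corresponds to the curvature tensor of the Bergman metric on $\cD$ transported down to $X$.

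Then I would verify \lref{p1}--\lref{p3} at a smooth point $p \in \reg X$. The identification $T \defn T_p X \isom \frg^{-1,1} = \bigoplus_i \frg_i^{-1,1}$ yields the splitting $T = T_1' \oplus \cdots \oplus T_m'$ of \lref{p1}. From the construction, $\sigma_p(u \tensor \alpha) = 0$ iff $\operatorname{ad}([u, \alpha])$ annihilates $\frg^{-1,1}$, which by faithfulness of the isotropy representation is equivalent to $[u, \alpha] = 0$ in $\frg^{0,0}$. Because $[\frg_i, \frg_j] = 0$ for $i \ne j$, this condition splits over the simple factors and produces the product structure \lref{p2}, with $\mathcal{CS}_j' \subset T_j'$ the first Mok characteristic cone of the irreducible domain $\cD_j$. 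By Mok's classical description, for $\rk \cD_j \ge 2$ this cone is the cone over the smooth, non-degenerate projective variety of minimal rational tangents of $\cD_j$; for $\rk \cD_j = 1$, \cref{rank one spaces} combined with the ball-free hypothesis forces $\cD_j = \bB^1$, so that $\dim T_j' = 1$ and $\mathcal{CS}_j' = \set 0$, matching the exceptional alternative in \lref{p3}.

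The main obstacle is matching the Lie-algebraically defined $\mathcal{CS}_j'$ above with Mok's classical variety of minimal rational tangents of $\cD_j$. Once this identification is in hand (via the geometric theory of Hermitian symmetric spaces), the remaining verifications become formal consequences of the uVHS formalism developed earlier.
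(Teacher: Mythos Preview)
Your approach is essentially the same as the paper's: construct $\sigma$ from the $K$-equivariant Lie bracket composition $\frg^{-1,1} \tensor \frg^{1,-1} \to \frg^{0,0} \to \End{\frg^{-1,1}}.$ and then identify its Mok characteristic cone with the curvature-theoretic one studied by Catanese--Di~Scala. Two small differences are worth noting. First, your opening appeal to \cref{prp uVHS} is unnecessary and runs counter to the paper's stated aim of passing between tensors and uVHS \emph{without} going through the universal cover; the factor decomposition $\frg = \bigoplus_i \frg_i$ comes directly from the simple ideals of $\frg_0$, so you can read off $\cD = \prod_i \cD_i$ purely from the Hodge group data. Second, the paper replaces $\sigma_i'$ by the identity on the one-dimensional factors so that $\sigma$ literally coincides with the tensor constructed in~\cite[proof of Thm.~1.2]{CataneseDiScala14}, allowing a direct citation for \lref{p1}--\lref{p3}; your uniform definition yields a nonzero scalar there instead, which has the same kernel and hence the same Mok cone, so the verification still goes through. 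The ``main obstacle'' you flag is exactly what the paper defers to \cite{KobayashiOchiai81} and \cite{CataneseDiScala14}.
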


\begin{proof}
By assumption, there is a bounded symmetric domain $\cD = \factor{G_0}{K_0}$ without higher-dimensional ball factors and an isomorphism $\theta \from \T{\reg X} \bij P \x_K \frg^{-1,1}$.
Dualizing, we get $\Omegap{\reg X}1 \isom P \x_K \frg^{1,-1}$.
Let $\cD = \cD_1 \x \cdots \x \cD_m$ be the decomposition of $\cD$ into irreducible bounded symmetric domains $\cD_i = \factor{G_i^0}{K_i^0}$.

The Lie algebra $\frg$ decomposes into simple factors $\frg = \bigoplus_{i=1}^m \frg_i$, where $\frg_i$ is the complexified Lie algebra of $G_i^0$.
From this we get $\frg^{p,-p} = \bigoplus_{i=1}^m \frg_i^{p,-p}$ for all $p \in \set{ -1, 0, 1 }$.
The Lie bracket of each $\frg_i$ induces a surjective morphism
\[ \rho_i \from \frg_i^{-1,1} \tensor \frg_i^{1,-1} \lto \frg_i^{0,0}, \qquad u \tensor v \mapsto [u, v]. \]
There is also a natural injective morphism in the other direction:
\[ \tau_i \from \frg_i^{0,0} \lto \frg_i^{-1,1} \tensor \frg_i^{1,-1} = \Hom \frg_i^{-1,1}.\frg_i^{-1,1}., \qquad w \mapsto [w, -]. \]
Setting $\sigma'_i \defn \tau_i \circ \rho_i$, we obtain an endomorphism of $\frg_i^{-1,1} \tensor \frg_i^{1,-1}$.
For each $1 \le i \le m$, we define $\sigma_i$ as follows:
\[ \sigma_i \defn \begin{cases}
\sigma'_i & \text{if $\dim\cD_i > 1$,} \\
\id_{\frg_i^{-1,1} \tensor \frg_i^{1,-1}} & \text{if $\dim\cD_i = 1$.}
\end{cases} \]
Finally we set $\sigma \defn \bigoplus_{i=1}^m \sigma_i$, an endomorphism of $\frg^{-1,1} \tensor \frg^{1,-1}$.

The endomorphism $\sigma$ induces an endomorphism of $\T{\reg X} \tensor \Omegap{\reg X}1$, which by reflexivity extends to an endomorphism $\sigma \in \HH0.X.\sE nd \big( \T X [\tensor] \Omegap X1 \big).$.
It remains to verify that $\sigma$ satisfies the three properties \lref{p1}--\lref{p3}, for which we restrict to any point $p \in \reg X$ and work again with $\sigma \in \End \frg^{-1,1} \tensor \frg^{1,-1}.$.

By construction, $\sigma_i$ coincides with the algebraic curvature tensor of an irreducible bounded symmetric domain defined in~\cite[p.~211]{KobayashiOchiai81} (paragraph preceding Lemma 2.9), for all $i$ such that the rank of $\cD_i$ is $\ge2$.
It is then clear that $\sigma$ coincides with the tensor defined in the first part of the proof of~\cite[Thm.~1.2]{CataneseDiScala14}.
Therefore, $\sigma$ satisfies \lref{p1}--\lref{p3} by the same arguments therein.
\end{proof}

\section{Proof of \cref{main2}, \cref{main3} and \cref{main4}}

\begin{proof}[Proof of \cref{main2}]
We prove both directions separately.

``$\lref{main2.1} \imp \lref{main2.3}$'': We know from~\cite[proof of Thm.~1.2, p.~429]{CataneseDiScala13} that on each $\cD_i$, there is a special tensor $\Psi_i$ semi-invariant under $\Aut(\cD_i)$.
(Here a special tensor is a semispecial tensor with $\eta$ the trivial line bundle, and ``semi-invariant'' means invariant up to a character $\chi_i \from \Aut(\cD_i) \to \set{ \pm 1}$.)
Consider the special tensor $\Psi \defn \operatorname{pr}_1^* \Psi_1 \tensor \cdots \tensor \operatorname{pr}_m^* \Psi_m$ on $\cD$, where $\operatorname{pr}_i \from \cD \to \cD_i$ are the projections.
As in~\cite[proof of Thm.~6.1]{GrafPatel25}, it can be seen that $\Psi$ descends to a semispecial tensor $\psi$ on $X$.

``$\lref{main2.3} \imp \lref{main2.1}$'': By \cref{main2 Hol0}, the existence of a semispecial tensor $\psi$ implies that $\operatorname{Hol}(\reg X, \omega_\ke)$ is the holonomy of a bounded symmetric domain $\cD$ whose irreducible factors are domains of tube type whose rank divides their dimension.
Moreover, from \cref{local iso} we have that $\reg X$ and $\cD$ are locally isometric.
By the same arguments as in the proof of \cref{tensor to uvhs}, it follows that $\reg X$ admits a uniformizing VHS for $\Aut(\cD)$.
We conclude from \cref{prp uVHS} that $X$ is uniformized by $\cD$.
\end{proof}

\begin{proof}[Proof of \cref{main3}]
The proof of \cref{main3} is very similar to the one of \cref{main2}, and hence is omitted.
\end{proof}

\begin{proof}[Proof of \cref{main4}]
\cref{main4} is an immediate consequence of combining \cref{main5} and \cref{prp uVHS}.
\end{proof}

\newcommand{\etalchar}[1]{$^{#1}$}

\end{document}